\def\Id{{\bf I}}
\def\zbar{{\bar{z}}}
\def\iint{\int }
\def\ID{{\Bbb D}}
\def\IN{{\Bbb N}}
\def\S2{{\bf S}(2)}
\def\IR{{\Bbb R}}
\def\IS{{\Bbb S}}
\def\IN{{\Bbb N}}
\def\IC{\Bbb C}
\def\1ton{1,2,\ldots,n}
\def\Div{{\rm Div}}
\newtheorem{theorem}{Theorem}[section]
\newtheorem{lemma}[theorem]{Lemma}
\newtheorem{corollary}[theorem]{Corollary}
\title[Optimal regularity for planar mappings of finite distortion ]{Optimal regularity for planar mappings of finite distortion}
\author[K. Astala]{Kari Astala}
\address{Department of Mathematics and Statistics, University of
Helsinki,  P.O. Box 68 (Gustaf H\"allströmin katu 2b), FI-00014 University of Helsinki, Finland.}
\email{kari.astala@helsinki.fi}
\author[J. \  Gill]{James  Gill}
\address{ Department of Mathematics,
Campus Box 1146,
Washington University in St. Louis,
St. Louis, MO 63130, Usa}
\email{jgill@math.wustl.edu}
\author[S.\ Rohde]{Steffen Rohde}
\address{Department of Mathematics, University of Washington, C-337 Padelford Hall,
Box 354350,  
Seattle, Washington 98195-4350,  
USA}
\email{rohde@math.washington.edu}
\author[E.\ Saksman]{Eero Saksman}
\address{Department of Mathematics and Statistics, University of
Helsinki,  P.O. Box 68 (Gustaf H\"allströmin katu 2b), FI-00014 University of Helsinki, Finland.}
\email{eero.saksman@helsinki.fi}
\thanks{First author was supported by the Academy of Finland, projects
no.\ 106257, 11064, 1118422 and  211485,  by the Finnish Center of Exellence  
Analysis and Dynamics and  project MRTN-CT-2006-035651, Acronym
CODY, of the European Commission.
Third author was supported by NSF Grant DMS-0501726.
Fourth author was supported by the Academy of Finland, projects
no.\ 113826 and 118765, and by the Finnish Center of Exellence Analysis and Dynamics.}
\subjclass[2000]{Primary 30C62. Secondary 42B20.}
\keywords{Mappings of finite distortion, exponential distortion, optimal regularity, area distortion}
\date{30 January 2008}
\begin{document}

\begin{abstract}
Let $f:\Omega\to\IR^2$ be a mapping of finite distortion, where $\Omega\subset\IR^2 .$ Assume that
the distortion function $K(x,f)$ satisfies $e^{K(\cdot ,f)}\in L^p_{loc}(\Omega )$ for some $p>0.$
We establish optimal regularity and area distortion estimates for $f$. In particular,
we prove that $|Df|^2 \, \log^{\beta -1}(e + |Df|) \in L^1_{loc}(\Omega) $ for every $\beta <p.$
This answers positively, in dimension $n=2$, the  well known  conjectures of Iwaniec-Sbordone  \cite[Conjecture 1.1]{IS} and of Iwaniec-Koskela-Martin \cite[Conjecture 7.1]{IKM}.
\end{abstract}

\dedicatory{We dedicate the paper to the memory of our dear friend and colleague
  Juha Heinonen (1960-2007).}

\input{amssym.def}
\input{amssym.tex}

\maketitle

\section{Introduction} We say that a mapping $f = (u,v) $ defined in a domain $\Omega \subset \IR^2$ is a {\it mapping of finite distortion} if

\begin{itemize}
 \item[(i)] $f  \in W^{1,1}_{loc}(\Omega)$,
 \item[(ii)] $  J(\cdot,f) =  u_x v_y - u_y  v_x  \in L^1_{loc}(\Omega)$, \hskip10pt and 
 \item[(iii)] there is a measurable function $K(z) \geq 1$, finite almost everywhere,  such that
 \begin{equation}\label{eq1} |Df(z)|^2\leq K(z)\, J(z,f) \hskip20pt \mbox{ almost everywhere in $\Omega$}. 
  \end{equation}
\end{itemize}
 The smallest such function is denoted by $K(z,f)$ and is called the distortion function of $f$.
\smallskip

In two dimensions  the mappings of finite distortion are intimately related to elliptic PDE's. For equations with non-smooth coefficients  the first condition, requiring that $f$ has
locally integrable distributional partial derivatives,  is the smallest degree of smoothness where one can begin to discuss what it means to be a (weak) solution to   such an equation.

 The second condition is a (weak) regularity property  which is automatically satisfied by  all homeomorphisms  $f$   in the Sobolev class $ W^{1,1}_{loc}(\Omega)$.  The last condition, that the distortion function $1 \leq K(z,f) < \infty$, merely asks that the pointwise Jacobian $J(z,f) \geq 0$ almost everywhere and that the gradient $Df(z)$ vanishes at those points $z$ where $J(z,f) = 0$. This is a minimal requirement for a mapping to carry   geometric information.

In two dimensions  the distortion inequality  (\ref{eq1}) can be reformulated with the complex notation as
\begin{equation} \label{tuulikki1}
\hskip30pt\left| \frac{\partial f}{\partial \zbar} \right| \leq k(z) \, \left| \frac{\partial f}{\partial z} \right|, \quad \quad  \mbox{ where }   k(z) := \frac{K(z) -1}{K(z) + 1}  < 1,\nonumber
\end{equation}
using  the identifications  $Df = | f_z| + |f_\zbar|$ for the operator norm and $J(z,f) = | f_z|^2 - |f_\zbar|^2$ for   the Jacobian. 
Yet another equivalent formulation  is  with  the  Beltrami equation,
\begin{equation} \label{tuulikki2}
\frac{\partial f}{\partial \zbar} = \mu(z) \frac{\partial f}{\partial z},  \quad \quad \quad  |\mu(z)| = \frac{K(z) -1}{K(z) + 1}  < 1 \; \; \mbox{a.e.},
\end{equation}
where the coefficient  $\mu$ can be defined simply via  the differential equation {\rm (\ref{tuulikki1})}.  Then at a given point $z$ we have  $K(z,f) < \infty$  if and only if  $|\mu(z)|  < 1$. 

The classical theory of quasiconformal mappings considers precisely the mappings of bounded distortion, that is mappings with $K(z,f)  \in L^\infty$ or, equivalently, mappings with complex dilation $| \mu(z)| \leq k $ for some constant  $k  <  1$. 
The classical measurable Riemann mapping theorem tells that if we have a coefficient $| \mu(z)| \leq k <  1$ for $z \in \IC$, then the equation (\ref{tuulikki2}) has a homeomorphic solution and all other solutions are obtained by post-composing with holomorphic functions, see \cite{B}, \cite{A}, or \cite[Chapter 5]{AIM}.

 It is natural to ask how much the condition $K(z,f)  \in L^\infty$ can be relaxed
in order to obtain a  useful theory. 
In the groundbreaking work \cite{David} David  generalized the measurable Riemann mapping theorem to  maps of exponentially integrable  distortion, where in general we  have $\| \mu \|_{\infty} =  1$. Later, Iwaniec,
Koskela, and Martin \cite{IKM} generalized the corresponding regularity theory to higher dimensions, and  for  the two dimensions  a systematic modern development was established by  Iwaniec and Martin as part of  their monograph \cite{IMbook}. During the last 10 years there has been an intensive study of finite distortion mappings, motivated also by the fact that these maps   have natural applications e.g. to non-uniformly elliptic equations and elasticity theory. We refer the reader for instance to  \cite{IMbook},\cite{IM}, \cite{BJ}, \cite{T}, \cite{IKM} or  \cite{FKZ}  and the
references therein, for the  basic literature on the subject.

As a typical example  of a mapping with finite but unbounded distortion  consider the following example\footnote{The  similar  formula (20.75) in  \cite{AIM} has a slight misprint.}. Let
 \begin{equation} \label{kovexample}
g_p(z) = \frac{z}{|z|} \, \left[ \log\left(e + \frac{1}{|z|} \right) \right]^{-p/2} \left[  \log  \log\left(e + \frac{1}{|z|}\right) \right]^{-1/2} \quad \quad  \mbox{ for } |z| < 1,
\end{equation}
and for $|z| >1$ set $g_p(z) =  c_0 \, z.$ 
For each value of the parameter $p > 0$ one easily computes that
\begin{equation} \label{kaava3}
e^{K(z, g_p)} \in L^{p}(\ID) , \nonumber
\end{equation}
while $e^{K(z, g_p)}$ is not locally integrable to any power $s>p.$
Furthermore, we have the fundamental regularity
$ g_p \in W^{1,2}_{loc}(\IC)  \mbox{  when } p >  1.
$
However, for $p=1$ the mapping   $g_1 \notin W^{1,2}_{loc}(\IC)$.

From example (\ref{kovexample}) we see that  the integrability of $e^{K(z)}$ is in general {\it not}  sufficient  for  the  $W^{1,2}_{loc}$-regularity. Instead  we have a  refined logarithmic scale of  regularity  around the space $W^{1,2}_{loc}$. Namely, for each $p > 0$,
$$    |Dg_p|^2 \log^{\beta -1} |Df| \in  L^1_{loc}(\IC), \quad 0 < \beta <  p,
$$
while the  inclusion fails for $\beta = p$.

The exponential integrability of the distortion function in this  theory is no coincidence. Indeed,  to ascertain the  continuity of the mapping $f$ of finite distortion, unless some additional  regularity is required, the exponential integrability  of $K$  suffices but this  condition  has only little room for relaxation, see \cite{IMbook}, Theorem 11.2.1  and \cite{AIM}, Section 20.5.
Hence the mappings of exponentially integrable distortion provide the most natural class where to look for a viable general theory.

 Within this class of mappings one of the  fundamental questions is their  (optimal) regularity. That is, given a homeomorphism with {\it a'priori}  only $ f \in W^{1,1}_{loc}(\Omega)$, we  assume that 
$$e^{K(z,f)} \in L^p_{loc}(\Omega), \hskip20pt \mbox{ where } p   >  0,
$$
and then  seek for  the optimal improvement on regularity this condition brings, i.e.  find in terms of $p$  the best possible Sobolev class where the mapping $f$ belongs to.

The example (\ref{kovexample})  suggests  natural candidates  for the extremal behaviour.   
Indeed, in their paper Iwaniec and Sbordone \cite{IS} formulated a precise general conjecture on the regularity of mappings with  exponentially integrable distortion. The main purpose of this paper is to give a proof to their conjecture, and establish the following theorem.
\medskip

 \begin{theorem}\label{22.hkinov07} Let $\Omega \subset \IR^2$ be a domain. Suppose that the distortion function $K(z,f)$ of  a mapping of finite distortion $ f  \in W^{1,1}_{loc}(\Omega)$ satisfies 
 \begin{equation} \label{keerolta5}
 e^{K(z, f)}  \in L^p_{loc}(\Omega) \quad \mbox{ for some }  p  > 0.
\end{equation}
 Then we have  for every $0 <  \beta <  p$,
 
 \begin{equation} \label{keerolta2}
 |Df|^2 \, \log^{\beta -1}(e + |Df|) \in L^1_{loc}(\Omega) \quad  \mbox{ and }  
\end{equation}
 \begin{equation} \label{keerolta1}
 \hskip-37pt J(z,f) \log^\beta \left(e +  J(z,f) \right)   \in L^1_{loc}(\Omega). 
\end{equation}

\noindent Moreover, for every $p>0$ there are examples that satisfy {\rm (\ref{keerolta5})}, yet fail  {\rm (\ref{keerolta2})} and  {\rm (\ref{keerolta1})}  for $\beta = p$.
 \end{theorem}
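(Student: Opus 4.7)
By David's existence theorem \cite{David,IMbook}, the Beltrami coefficient $\mu$ of $f$ admits a principal homeomorphic solution $h:\IC\to\IC$, conformal outside a large disk, and on $\Omega$ one has $f=\phi\circ h$ for some holomorphic $\phi$. Since $K(z,h)=K(z,f)$, $|Df|=|\phi'(h)|\,|Dh|$ and $J(z,f)=|\phi'(h)|^2J(z,h)$, the change of variable $w=h(z)$ reduces (\ref{keerolta2}) and (\ref{keerolta1}) to the same assertions for $h$ on compact subsets. Furthermore, Young's inequality in the Orlicz pair $\Phi(a)\sim e^a$, $\Psi(b)\sim b\log b$ yields, for $\beta<\gamma<p$,
\[
KJ\,\log^{\beta-1}(e+KJ)\;\leq\;C_\gamma\bigl(e^{\gamma K}+J\log^{\gamma}(e+J)\bigr),
\]
so that (\ref{keerolta1}) automatically implies (\ref{keerolta2}) via $|Dh|^2\leq K(\cdot,h)J(\cdot,h)$. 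It therefore suffices to prove the Jacobian integrability
\[
\int_{\ID}J(z,h)\,\log^\beta\!\bigl(e+J(z,h)\bigr)\,dm(z)<\infty\qquad\text{for every }\beta<p.
\]

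\textbf{Step 2 (Area distortion).} The central technical input I would aim for is the area distortion bound
\[
|h(E)|\;\leq\; C\,\bigl(\log(1/|E|)\bigr)^{-p}\qquad E\subset\ID,\ |E|\leq 1/2.
\]
A standard layer-cake argument converts this into $|\{J(z,h)>t\}|\leq C(\log t)^{-p}$, and integration against $\log^\beta(e+t)$ then gives the Jacobian estimate above for every $\beta<p$. One natural route is truncation: for each $t>0$ set $\mu_t:=\mu\,\mathbf{1}_{\{K\leq t\}}$ and let $h_t$ be the corresponding principal $t$-quasiconformal solution. Astala's classical area distortion \cite{AIM} gives $|h_t(E)|\leq C|E|^{1/t}$, while Chebyshev's inequality yields the tail bound $|\{K>t\}|\leq \|e^K\|_{L^p}^p\,e^{-pt}$; a Koebe/harmonic-measure comparison relates $|h(E)|$ and $|h_t(E)|$ up to an error controlled by the tail, and optimising $t\sim p^{-1}\log(1/|E|)$ produces the desired decay. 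A conceptually sharper alternative embeds $\mu$ in the holomorphic family $\mu_\lambda=\lambda\mu/\|\mu\|_\infty$, $\lambda\in\ID$, and uses subharmonicity of $\lambda\mapsto\log|h_\lambda(E)|$ together with sharp Beurling-transform bounds on boundary circles $|\lambda|=r<1$, reproducing Astala's complex-interpolation argument on an Orlicz scale calibrated to $e^K\in L^p$ instead of a fixed $L^q$.

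\textbf{Main obstacle and sharpness.} The genuinely delicate point is obtaining the optimal exponent $p$ in the area distortion: Astala's interpolation machinery is tuned to the elliptic regime $\|\mu\|_\infty<1$, and the Beurling-transform estimates on which it rests degenerate as $\|\mu\|_\infty\to1$, which is precisely the regime forced upon us by the exponentially integrable hypothesis. Constructing a holomorphic deformation whose boundary $L^q$ bounds match the Orlicz scale prescribed by $e^{K}\in L^p$, or equivalently establishing a sharp weighted Beurling-transform estimate with weight tied to $e^{-K}$, is what makes the argument nontrivial; I expect this to require either a Burkholder/Bellman-function input or an explicit extremal family modelled on $g_p$ itself. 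Once the area distortion is in hand, the sharpness clause at $\beta=p$ is immediate: a direct computation on the radial family $g_p$ of (\ref{kovexample}) shows that both (\ref{keerolta2}) and (\ref{keerolta1}) fail exactly at the endpoint.
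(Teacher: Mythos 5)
Your Step 1 faithfully reproduces the paper's reduction: Stoilow factorization to a principal solution, and the pointwise Orlicz--Young inequality that derives (\ref{keerolta2}) from (\ref{keerolta1}). You have also correctly identified the heart of the matter as an area distortion estimate, which the paper establishes as Theorem \ref{2.hkinov07}. However, there are two real gaps beyond what you label as an obstacle.

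First, the conversion from area distortion to Jacobian integrability is wrong as you state it. From $|h(E)| \leq C\log^{-\beta}(e+1/|E|)$ applied to $E=\{J>t\}$ and $t|E| \leq \int_E J = |h(E)|$, one obtains $|\{J>t\}| \lesssim 1/(t\log^\beta t)$, not $C(\log t)^{-p}$, which is far too weak (it does not even give $J\in L^1$). More importantly, even the correct distribution-function bound, fed into a naive layer-cake integral $\int_0^\infty |\{J>t\}|\,\Phi'(t)\,dt$ with $\Phi(t)=t\log^{\beta'}(e+t)$, only yields $\beta' < \beta - 1$ and thus loses a logarithmic unit. The paper avoids this loss by passing to the nonincreasing rearrangement $J^*$: setting $\phi(t)=\int_0^t J^*(x)\,dx$, the area distortion gives $\phi(t)\leq C\log^{-\alpha}(e+1/t)$ for any $\alpha\in(\beta,p)$; the crude bound $J^*(t)\leq C/t$ is used only on the logarithmic factor, and then $\int_0^\pi \phi'(x)\log^\beta(e+1/x)\,dx$ is estimated by \emph{integrating by parts}, transferring the derivative onto the weight. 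This recovers the full range $\beta<p$ and is essential; a direct layer-cake argument does not suffice.

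Second, the area distortion estimate itself is where the paper's main new ideas lie, and neither of your sketched routes matches them or is carried through. The paper's mechanism is the sharp Neumann-series decay $\|(\mu\mathcal{S})^n\mu\|_2 \lesssim (n+1)^{-\beta/2}$ for every $\beta<p$ (Theorem \ref{2helsinkinov07}). This is proved by splitting $\ID$, at iteration $n$, into a "good" set where $|\mu|\leq 1-\tfrac{\beta}{2n+\beta}$ and a "bad" set of exponentially small measure (Chebyshev from $e^{pK}\in L^1$), and controlling the bad contribution via the Cauchy-integral representation
$\chi_E(\mu\mathcal{S})^n\mu = \tfrac{1}{2\pi i}\int_{|\lambda|=\rho}\lambda^{-n-2}f^\lambda_{\bar z}\chi_E\,d\lambda$
together with Astala's area distortion for the holomorphic quasiconformal family $f^\lambda$. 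This gives Corollary \ref{1helsinkinov07} (exponent $1-\beta/2$, only useful for $\beta>2$), and the optimal exponent in Theorem \ref{2.hkinov07} is then reached via a second, separate ingredient: the hyperbolic-geometry factorization of Corollary \ref{lennossa}, writing $f=g\circ F$ with $g$ an $M$-quasiconformal principal map and $F$ satisfying $e^{pMK(z,F)}\in L^1(\ID)$; applying Astala's area distortion to $g$ divides the exponent by $M$ and sends $M\to\infty$ to reach $\beta<p$. Your first truncation sketch ($\mu_t=\mu\mathbf{1}_{\{K\leq t\}}$ and a "Koebe/harmonic-measure comparison") does not appear in the paper and the comparison step is undeveloped; your second sketch (holomorphic family plus boundary $L^q$-estimates) is closer in spirit to the Cauchy-integral idea but no Burkholder/Bellman input is actually required — the good/bad splitting plus the quasiconformal factorization suffices.
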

 \medskip

 Of particular interest   is the question of when the derivatives  of the mappings are $L^2$-integrable. 
 As a special case of our  theorem one has  
\medskip

\begin{corollary}\label{l2tapaus} Let $\Omega \subset \IR^2$ be a domain and  $ f  \in W^{1,1}_{loc}(\Omega)$ a mapping of finite distortion. Suppose that 
\begin{equation}\label{a}e^{K(z, f)}  \in L^p_{loc}(\Omega) \quad \mbox{ for some }  p  > 1.
 \end{equation}
Then $ f \in W^{1,2}_{loc}(\Omega)$.
 
 The family of mappings  {\rm (\ref{kovexample})} shows that $W^{1,2}_{loc}$-regularity  fails  in general at $p=1$.
  \end{corollary}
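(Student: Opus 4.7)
The corollary should follow almost immediately from Theorem \ref{22.hkinov07} once we exploit the hypothesis $p>1$, which gives us a bit of room to work with strictly positive log-powers. Here is the plan.

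The first step is to choose any $\beta$ with $1 < \beta < p$; this is possible precisely because $p>1$. By Theorem \ref{22.hkinov07} applied to $f$, we then have
\begin{equation*}
|Df|^2 \, \log^{\beta -1}(e + |Df|) \in L^1_{loc}(\Omega).
\end{equation*}
The key observation is that since $\beta - 1 > 0$ and $e + |Df| \geq e$, the weight factor satisfies $\log^{\beta-1}(e+|Df|) \geq 1$ pointwise. Therefore, trivially,
\begin{equation*}
|Df|^2 \leq |Df|^2 \, \log^{\beta -1}(e + |Df|) \quad \text{almost everywhere},
\end{equation*}
and integrating over any compact subset of $\Omega$ gives $|Df|^2 \in L^1_{loc}(\Omega)$. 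Combined with the standing assumption $f \in W^{1,1}_{loc}(\Omega)$, this yields $f \in W^{1,2}_{loc}(\Omega)$.

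For the sharpness statement, I would simply point to the example $g_p$ from \eqref{kovexample} with $p=1$: it was already noted in the introduction that $e^{K(z,g_1)} \in L^1(\ID)$ while $g_1 \notin W^{1,2}_{loc}(\IC)$, so $W^{1,2}_{loc}$-regularity genuinely fails at the endpoint $p=1$. There is no real obstacle in this argument — the whole content of the corollary sits inside Theorem \ref{22.hkinov07}, and the role of the hypothesis $p>1$ is merely to make the logarithmic weight act in the favorable direction (bounded below by $1$ rather than tending to $0$). The hard work is entirely in the proof of the main theorem.
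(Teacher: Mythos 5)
Your argument is correct and matches the paper's intent exactly: the paper presents Corollary \ref{l2tapaus} as a direct specialization of Theorem \ref{22.hkinov07}, and the only point to notice is that $p>1$ permits a choice $\beta \in (1,p)$ for which $\log^{\beta-1}(e+|Df|) \geq 1$, so the weighted estimate \eqref{keerolta2} dominates $|Df|^2$ pointwise. The sharpness claim is likewise handled as you describe, by the example $g_1$ from \eqref{kovexample}.
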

  \medskip

 \noindent Previously \cite{IKM}, \cite{IS} it was known that  there exists some  constant  $p_0 >1$  so that $e^{K(z,f)} \in L^p_{loc}$ with   $ p > p_0$ implies   $f \in  W^{1,2}_{loc}$. Our result identifies the precise regularity borderline $p_0 = 1$. Similarly \cite{IMbook} (see \cite{IPMS}, \cite{FKZ} for  $n$-dimensional results) the conclusions {\rm (\ref{keerolta2})} and {\rm (\ref{keerolta1})} were only known to hold 
 under the stronger assumption  $\beta < c_1\, p,$ where $c_1 \in (0,1)$ is some unspecified  constant. 

Our proof of optimal regularity employs the approach  of David \cite{David}. It turns out that it is practically impossible to describe our proof without delving in depth into David's mapping theorem. Hence, to make the exposition as clear as possible we will include a straightforward proof of this result, and present this  in the modern framework developed by Iwaniec and Martin \cite{IMbook}:

\begin{theorem}\label{Existence2} {\rm [David, Iwaniec-Martin]}
Suppose the distortion function $K=K(z)$ is such that $$e^{K}\in L^p(\ID) \quad \mbox{ for some } p > 0.$$
Assume also that $\mu(z) \equiv 0$ for $|z| > 1$.  
Then the Beltrami equation 
 $ f_\zbar(z)=\mu(z) \; f_z(z)$ admits a unique   principal solution $f$ for which
\begin{equation} \label{3varsQ}
f\in W^{1,Q}_{loc}(\IC), \hskip20pt Q(t)=t^2 \log^{-1}(e+t).\nonumber
\end{equation} 
The principal solution is a homeomorphism.
Moreover,  every other $W^{1,Q}_{loc}$-solution $h$ to this  Beltrami equation in a domain $\Omega \subset \IC$ admits the factorization $$h = \phi \circ f,$$ where $\phi$ is a holomorphic function in the domain  $f(\Omega)$.
\end{theorem}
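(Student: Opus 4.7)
The plan follows David's approximation approach. For each $n \in \IN$ set $\mu_n(z) := \mu(z) \chi_{\{K(z) \leq n\}}(z)$, so that $\|\mu_n\|_\infty < 1$. By the classical measurable Riemann mapping theorem, $(f_n)_\zbar = \mu_n (f_n)_z$ admits a unique quasiconformal principal solution $f_n$, normalized by $f_n(z) = z + O(1/z)$ near infinity. The strategy is to derive uniform estimates on $\{f_n\}$ depending only on $p$ and $\|e^K\|_{L^p(\ID)}$, then extract a locally uniformly convergent subsequence, and verify that the limit is a $W^{1,Q}_{loc}$-solution of the original equation.

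The first uniform estimate is a quantitative modulus of continuity $|f_n(z) - f_n(w)| \leq \eta(|z-w|)$ with $\eta$ depending only on $p$ and $\|e^K\|_{L^p(\ID)}$. This is obtained from the classical conformal-modulus-of-ring-domains argument, combined with a weighted modulus estimate exploiting the exponential integrability of $K$; the bound is uniform in $n$ since $K(\cdot, f_n) \leq K(\cdot)$ pointwise, and it implies in particular that $\mathrm{diam}(f_n(\oD))$ is uniformly bounded.

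The second estimate is a uniform bound on $\iint_B |Df_n|^2/\log(e+|Df_n|)\, dA$ for compact $B$. Starting from $|Df_n|^2 \leq K(z)\, J(z, f_n)$, the elementary Young-type inequality $(pK)\cdot s \leq e^{pK} + s\log(e+s)$ applied with $s = J(z,f_n)/\log(e+J(z,f_n))$ (for which $s\log(e+s) \leq J(z,f_n)$) yields
$$\frac{K(z)\, J(z, f_n)}{\log(e + J(z, f_n))} \;\leq\; \frac{1}{p}\Big(e^{pK(z)} + J(z, f_n)\Big).$$
Combined with the pointwise estimate $\log(e+|Df_n|) \geq \tfrac{1}{2}\log(e+J(z,f_n))$ (which follows from $|Df_n|^2 \geq J(z,f_n)$ and $(e+\sqrt{J})^2 \geq e+J$) and the change-of-variables identity $\iint_B J(z,f_n)\,dA = |f_n(B)|$, bounded by the equicontinuity above, this yields the desired uniform bound. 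Arzel\`a--Ascoli then extracts $f_{n_k} \to f$ locally uniformly, and weak compactness in the Orlicz-Sobolev space of $Q$ produces $f \in W^{1,Q}_{loc}(\IC)$ with $Df_{n_k} \to Df$ weakly. Since $\mu_{n_k} \to \mu$ a.e., passing to the distributional limit in $(f_{n_k})_\zbar = \mu_{n_k} (f_{n_k})_z$ yields $f_\zbar = \mu f_z$; injectivity of $f$ follows from the same modulus estimate applied to $f_n^{-1}$, which cannot collapse because of the hydrodynamic normalization.

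For the uniqueness and Stoilow factorization, given any $W^{1,Q}_{loc}$-solution $h$ in a domain $\Omega$, one forms $\phi := h \circ f^{-1}$ on $f(\Omega)$. A careful chain-rule computation, together with the fact that $h$ and $f$ satisfy the same Beltrami equation, gives $\phi_\zbar = 0$ distributionally; Weyl's lemma then makes $\phi$ holomorphic. The principal obstacle is precisely this last step: justifying the chain rule rigorously requires $\phi \in W^{1,1}_{loc}(f(\Omega))$, which in turn rests on delicate composition estimates in the Orlicz-Sobolev class. The gauge $Q(t) = t^2/\log(e+t)$ is calibrated exactly so that these composition estimates survive in the presence of exponentially integrable distortion.
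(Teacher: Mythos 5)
Your proposal follows David's original route of direct truncation and compactness, and is a legitimate path, but it is genuinely different from what the paper does. The paper first proves Theorem~\ref{Existence} in the restricted range $p>2$, where the sharp Neumann-series decay of Theorem~\ref{2helsinkinov07} gives uniform $W^{1,2}$ control on the approximants and the modulus of continuity falls out immediately from the elementary oscillation Lemma~\ref{cc26} (applied to the $f^m$ and, via Theorem~\ref{yksi}, to their inverses). The general case $p>0$ is then reduced to the high-integrability case by a hyperbolic-geometry factorization: for each $z$ a coefficient $\nu(z)$ is picked on the radial segment to $\mu(z)$ so that $\rho_\ID(\nu,\mu)\le\log M$ with $M=3/p$; this makes the ``quotient'' coefficient $\kappa$ in \eqref{kapdefdeg} uniformly elliptic, while $e^{pMK_\nu}\in L^1(\ID)$ with $pM=3>2$. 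One then writes $f=g\circ F$ with $g$ honest-to-goodness quasiconformal and $F$ handled by Theorem~\ref{Existence}. This factorization (recorded as Corollary~\ref{lennossa}) is not a cosmetic alternative: it is reused as a key tool in the proof of the optimal area-distortion bound Theorem~\ref{2.hkinov07}, hence of the main Theorem~\ref{22.hkinov07}. Your direct approach would not produce it.

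Two steps of your plan are currently unproved. First, the uniform modulus of continuity for the approximants $f_n$ at general $p>0$ is stated via ``the classical conformal-modulus-of-ring-domains argument, combined with a weighted modulus estimate,'' but this is a nontrivial theorem (it is David's estimate; cf.\ \cite{IMbook}, Section 11.2, or \cite{AIM}, Section 20.5) and is precisely the external input the paper's factorization is designed to avoid — in the paper, continuity comes for free from $W^{1,2}$-bounds and Lemma~\ref{cc26}, which have essentially one-line proofs. Second, for uniqueness you propose to form $\phi=h\circ f^{-1}$ and justify the chain rule; you correctly identify this as ``the principal obstacle'' but then only assert that $Q$ is ``calibrated exactly so that these composition estimates survive,'' without proving them. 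The paper does not attempt this: it invokes the Iwaniec--Martin Stoilow factorization (Theorem~\ref{genstoilow}, i.e.\ \cite{IMbook}, Theorem~11.5.1) as a black box and explicitly says so. If your intention was to cite that theorem, say so; if you intended to reprove it, that step is a genuine gap.
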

\medskip

\noindent By a principal solution we mean a homeomorphism $ f  \in W^{1,1}_{loc}(\IC)$ that satisfies
\begin{equation} \label{degenerate2}
 f(z)=z+\frac{a_1}{z}+\frac{a_2}{z^2}+\cdots \end{equation}
outside some compact set.

\bigskip
 
There is  a simple explanation  why it is convenient  to develop the theory of mappings of exponential distortion using the space $W^{1,Q}_{loc}$ as a starting point. 
Namely, suppose $f \in W^{1,1}_{loc}(\IC)$ is an orientation preserving homeomorphism, that is  $J(z,f)\geq 0$,  whose distortion function $K(z)$ satisfies $e^K \in L^p_{loc}(\IC)$ for some $ 0 < p < \infty$.  The Jacobian of any planar Sobolev homeomorphism  is locally integrable. Hence we may use the elementary inequality
\[
ab\leq a \log(1+a) + e^b-1
\]
to find that
\begin{eqnarray*} \frac{|Df|^2}{\log(e+|Df|^2)} &\leq&  \frac{KJ}{\log(e+KJ)} 
 \leq \frac{1}{p} \; \frac{J}{\log(e+J)} \; p K
   \; \;  \leq  \; \;  \frac{1}{p}\left( J + e^{pK}-1\right)
\end{eqnarray*}
for all $p>0$.  Thus
\begin{equation}\label{D17.72}
\int_\Omega \frac{|Df|^2}{\log(e+|Df|)} \leq \frac{2}{p}\int_\Omega J(z,f) + \frac{2}{p}\int_\Omega
[e^{pK(z)}-1]\; dz
\end{equation}
for any bounded domain.  This shows that $f$ automatically belongs to the Orlicz-Sobolev class $W^{1,Q}_{loc}(\IC)$.
      
 Conversely  in  \cite{IMbook}, Theorem 7.2.1  it is shown  that for an orientation preserving mapping $f$  the $L^2\log^{-1}L$--integrability of the differential $Df$  implies that  $J(z,f) \in L^1_{loc}(\Omega)$.   This slight gain in the regularity of the  Jacobian determinant is precisely what makes the space $W^{1,Q}_{loc}$  the natural framework for the theory of mappings of exponentially integrable distortion.  
\bigskip

The structure of this paper is the following. First in Section \ref{modcont} we recall the basic 
modulus of continuity estimates for mappings of finite distortion. Section
\ref{neumannkasvu} presents the crucial sharp  estimates for the rate of decay of the Neumann series
of the solution to the  Beltrami equation, in the case of an exponentially integrable distortion. Theorem \ref{Existence2}
is then proven in Section \ref{above}, where our  presentation is  self-contained, apart from the generalized uniqueness and
Stoilow factorization theorem due to Iwaniec and Martin (see Theorem \ref{genstoilow} below). The proof of our main result, Theorem \ref{22.hkinov07} is the content of Section \ref{optimal}. In that section we also  obtain optimal area distortion estimates. The optimality of our  results on the rate of convergence of   the Neumann series
is considered in Section \ref{div}. Finally, Section \ref{elliptic} gives  an application
of the optimal regularity results to degenerate elliptic equations.

 \section{Modulus of Continuity.} \label{modcont}

There is a particularly elegant geometric approach to obtaining modulus of  continuity estimates for functions that are
 monotone. We will not consider the notion of monotonicity in its full generality, but just say that a continuous function $u$  in a domain $\Omega$ is {\it monotone}, if for each relatively compact subdomain $\Omega' \subset \Omega$ we have
 $$  \max_{\partial \Omega'} u =  \max_{ \Omega'} u \quad \mbox{ and } \quad  \min_{\partial \Omega'} u =  \min_{ \Omega'} u,
 $$
that is, if $u$ satisfies both the maximum and minimum principle. For instance, coordinates of open mappings are monotone.
 
 The idea of the next well known argument  goes back to Gehring  in his
study of the Liouville theorem in space.  

\begin{lemma}\label{cc26}
Let $u\in W^{1,2}(2\ID)$ be a continuous and monotone function.  Then for
all
$a,b\in
\ID$ we have
\begin{equation} \label{cc}
|u(a)-u(b)|^2 \leq \frac{\pi \int_{3\ID} |\nabla u|^2}{\log\left(e+\frac{1}{|a-b|}\right)}.
\end{equation}
\end{lemma}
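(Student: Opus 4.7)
The plan is to imitate the classical capacity/modulus argument of Gehring, which is tailor-made for monotone functions: the monotonicity hypothesis lets us replace the pointwise difference $u(a)-u(b)$ by the oscillation of $u$ on any circle enclosing both points, after which the bound reduces to polar-coordinate calculus.

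Concretely, fix $a,b\in\ID$, put $c=(a+b)/2$ and $\rho_0=|a-b|/2$, so that both $a$ and $b$ lie in $\overline{B(c,\rho)}$ for every $\rho\geq\rho_0$. As long as $\overline{B(c,\rho)}$ sits inside the domain of $u$, the maximum/minimum principle provided by monotonicity yields
\[
|u(a)-u(b)|\ \leq\ \max_{\partial B(c,\rho)}u-\min_{\partial B(c,\rho)}u\ =:\ {\rm osc}_{\partial B(c,\rho)}\,u.
\]
For almost every $\rho$ the ACL characterisation of $W^{1,2}$ makes $u|_{\partial B(c,\rho)}$ absolutely continuous. On a closed curve the maximum and minimum are linked by two complementary arcs, each of which contributes at least ${\rm osc}\,u$ to the total variation, so
\[
2\,{\rm osc}_{\partial B(c,\rho)}\,u\ \leq\ \int_0^{2\pi}|\partial_\theta u(c+\rho e^{i\theta})|\,d\theta.
\]
Squaring and applying Cauchy--Schwarz together with $|\partial_\theta u|\leq\rho|\nabla u|$ gives
\[
|u(a)-u(b)|^2\ \leq\ \frac{\pi\rho}{2}\int_{\partial B(c,\rho)}|\nabla u|^2\,ds.
\]

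The final step is to divide by $\rho$ and integrate $\rho$ from $\rho_0$ up to $R=2$; since $|c|<1$ the annulus $B(c,R)\setminus B(c,\rho_0)$ lies inside $3\ID$, so Fubini yields
\[
|u(a)-u(b)|^2\,\log\!\frac{R}{\rho_0}\ \leq\ \frac{\pi}{2}\int_{3\ID}|\nabla u|^2.
\]
With $R=2$ and $\rho_0=|a-b|/2$ the logarithm becomes $\log(4/|a-b|)$, and a brief algebraic check of the inequality $2\log(4/s)\geq \log(e+1/s)$ for $s\in(0,2]$ (equivalent to $es^2+s\leq 16$, verified at the endpoint $s=2$) converts the estimate into the form asserted in the lemma.

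The only conceptual ingredient is the monotonicity step at the very beginning; everything that follows---ACL slicing, the half-variation bound on a closed curve, Cauchy--Schwarz, and Fubini on a polar annulus---is routine. I therefore do not expect a real obstacle; the only thing to be careful about is keeping the disks $\overline{B(c,\rho)}$ inside the domain where $u$ is available as $\rho$ ranges up to $R$, and this is automatic because $a,b\in\ID$ forces $|c|<1$.
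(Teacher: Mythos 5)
Your proof follows essentially the same route as the paper's: centre a family of circles at the midpoint $c=(a+b)/2$, use monotonicity to dominate $|u(a)-u(b)|$ by the oscillation on each circle, convert that to a bound on $\tfrac12\int|\partial_\theta u|$, apply Cauchy--Schwarz, and integrate the resulting differential inequality in $\rho$; the paper merely compresses your two-step ``oscillation $\le$ half the total variation'' observation into the single inequality $|u(a)-u(b)|\le\tfrac12\int_{\partial\ID(z,t)}|\nabla u|$.

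One small inaccuracy worth flagging: you assert that keeping $\overline{B(c,\rho)}$ inside the domain ``is automatic because $a,b\in\ID$ forces $|c|<1$.'' That is not right as stated — $|c|<1$ only guarantees $\overline{B(c,\rho)}\subset 2\ID$ for $\rho<2-|c|$, which can be much less than your chosen $R=2$. The paper's own choice of outer radius $\rho_0=\max\{1,2r\}$ has the same feature: these disks need not stay inside $2\ID$, only inside $3\ID$. The statement is therefore best read with $u$ actually available on $3\ID$, which is consistent with the $\int_{3\ID}|\nabla u|^2$ appearing in the conclusion; under that reading your argument is correct and matches the paper's step for step, and your closing algebraic check $16\ge es^2+s$ for $s\in(0,2]$ is a valid way to pass from $\log(4/|a-b|)$ to $\log(e+1/|a-b|)$.
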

\begin{proof}  Consider the two concentric disks $\ID(z,r)$ and $\ID(z,1)$,  where $z=\frac{1}{2}(a+b)$ and
$r=\frac{1}{2}|a-b|$.  Note that all disks $\ID(z,t)$ with $t\leq \rho_0 = \max \{1, 2r \} $  lie in $3\ID$.

 We have by monotonicity
\[
|u(a)-u(b)| \leq \frac{1}{2}\int_{\partial \ID(z,t)} |\nabla u|\, |dw|
\]
for almost all $r<t<1$. By  Cauchy-Schwartz 
\begin{equation}\label{ccx}
\frac{|u(a)-u(b)|^2}{  t} \leq \frac{\pi}{2 }\int_{\partial
\ID(z,t)} |\nabla u|^2 \, |dw| .\nonumber
\end{equation}
We  integrate with respect to $t$,   $r < t <\rho_0$,  to get 
$$  |u(a)-u(b)|^2 \;  \log(\rho_0 /r) \leq \frac{\pi}{2}  \iint_{\ID(z,\rho_0)}  |\nabla u|^2 dm \leq \frac{\pi}{2}  \iint_{3\ID}  |\nabla u|^2 \,dm .
$$
The estimate (\ref{cc}) is a quick  consequence.
 \end{proof}

In view of this result uniform bounds for the $L^2$-derivatives become valuable. Within the (uniformly elliptic) Beltrami equation such bounds can be obtained for  the {\it inverse} of  the principal solution.
Iwaniec and Sver\'ak \cite{ISv} were the first to make a systematic use of this phenomenon.

\begin{theorem} \label{yksi}
Let $f \in W^{1,2}_{loc}(\IC)$ be the principal solution to $f_{\overline{z}}(z)=\mu(z) f_z(z)$, $ z  \in   \IC$, where $\mu$  is  supported in the unit disk $\ID$ with $\| \mu \|_\infty < 1 $. Let $g = f^{-1}$ be the inverse of $f$. Then 
 \[
 \int_{\IC}  ( |g_{\overline{w}}|^2+|g_w-1|^2 ) dw \leq 2 \int_\ID K(z,f)\, dz .
 \]
\end{theorem}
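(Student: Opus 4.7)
My plan is to combine two observations. First, the explicit formulas for the derivatives of the inverse of a classical quasiconformal mapping (valid since $\|\mu\|_\infty<1$) give, at almost every $z$,
\[
g_w(f(z))=\frac{\overline{\fz(z)}}{J(z,f)},\qquad g_{\bar w}(f(z))=-\frac{\fzbar(z)}{J(z,f)}.
\]
A change of variable $w=f(z)$ with Jacobian $J(z,f)$, combined with $\fzbar=\mu\fz\equiv 0$ outside $\ID$, will then yield
\[
\int_\IC|g_{\bar w}|^2\,dw=\int_\ID\frac{|\fzbar|^2}{J(z,f)}\,dz\leq\int_\ID K(z,f)\,dz,
\]
where the last step uses the pointwise inequality $|\fzbar|^2\leq|Df|^2\leq K(z,f)\,J(z,f)$.

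Second, I plan to show $\int_\IC |g_w-1|^2\,dw = \int_\IC |g_{\bar w}|^2\,dw$ by a Stokes-type argument. Put $G(w):=g(w)-w$, so that $G_w=g_w-1$ and $G_{\bar w}=g_{\bar w}$. Since $\mu$ vanishes outside $\ID$, $f$ is conformal there, and the principal solution expansion $f(z)=z+a_1/z+O(|z|^{-2})$ at infinity implies that $g$ is conformal outside the compact set $f(\oD)$ with $g(w)=w+O(|w|^{-1})$. Consequently $G_{\bar w}$ is compactly supported, $G(w)=O(|w|^{-1})$, and $G_w(w)=O(|w|^{-2})$ at infinity. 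Using the identity $(|G_w|^2-|G_{\bar w}|^2)\,dA = J(w,G)\,dA = \tfrac{i}{2}\,d(G\,d\bar G)$ together with Stokes' theorem on the disk $|w|\leq R$, one obtains
\[
\int_{|w|\leq R}(|G_w|^2-|G_{\bar w}|^2)\,dA=\frac{i}{2}\int_{|w|=R} G\,d\bar G = O(R^{-2})\to 0,
\]
from the decay estimates above. This gives $\|G_w\|_{L^2(\IC)}=\|G_{\bar w}\|_{L^2(\IC)}$.

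Combining both inputs yields $\int_\IC(|g_{\bar w}|^2+|g_w-1|^2)\,dw = 2\int_\IC|g_{\bar w}|^2\,dw \leq 2\int_\ID K(z,f)\,dz$, as required. The principal technical difficulty is justifying the Stokes identity at the $W^{1,2}_{\rm loc}$ regularity of $G$. This can be handled either by mollifying $G$, applying Stokes for the smooth approximants, and passing to the $L^2$-limit using the decay at infinity, or equivalently by representing $G=\CT G_{\bar w}$ as a Cauchy transform, which is legitimate because $G_{\bar w}$ is compactly supported and $G$ vanishes at infinity, and then invoking the $L^2$-isometry of the Beurling transform $\Sp$, through which $G_w=\Sp G_{\bar w}$, so that $\|G_w\|_{L^2}=\|G_{\bar w}\|_{L^2}$ without any boundary calculation.
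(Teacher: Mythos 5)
Your proof is correct and follows essentially the same route as the paper's: both arguments first establish $\|g_w-1\|_{L^2(\IC)}=\|g_{\bar w}\|_{L^2(\IC)}$ by showing the integral of the Jacobian of $g(w)-w$ vanishes (the paper via integration by parts, you via Stokes or equivalently the $L^2$-isometry of $\mathcal S$), and then bound $\|g_{\bar w}\|_{L^2}^2$ by $\int_\ID K(z,f)$ through a change of variables $w=f(z)$ and the distortion inequality. The only cosmetic difference is that you write out the explicit formulas for $g_w, g_{\bar w}$ in terms of $f_z,f_{\bar z}$ before changing variables, whereas the paper works with $|Dg|^2 = K(w,g)J(w,g)$ and then uses $K(f(z),g)=K(z,f)$; the two computations are identical in substance.
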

\begin{proof}
Since the function $g(z)-z$ has derivatives in $L^2(\IC)$, an integration by parts shows that the integral of its Jacobian vanishes. This gives
$$   \int_{\IC}  |g_w-1|^2 -   |g_{\overline{w}}|^2 = \int_{\IC} J(w,g) = 0.
$$
Therefore
\begin{eqnarray}
      \int_{\IC}  |g_w-1|^2 &=& \int_{\IC}  |g_{\overline{w}}|^2 = \int_{f(\ID)}  |g_{\overline{w}}|^2 \leq \int_{f(\ID)}  |D g|^2 \nonumber   \\
     &=&\int_{f(\ID)} K(w,g) J(w,g) =  \int_{\ID}  K\left( f(z), g\right) = \int_{\ID}K(z,f) \nonumber
\end{eqnarray}
since quasiconformal mappings satisfy the usual rules of change of variables.
\end{proof}

 The key fact in the following modulus of continuity estimate, as in Theorem \ref{yksi},    is  that  the bounds do not depend on the value of $ \| \mu \|_\infty$. 
It is obtained easily by combining Lemma \ref{cc26} with Theorem \ref{yksi}.

\begin{corollary}\label{apuva}
Let $f$ and $g= f^{-1}$ be as in Theorem {\rm \ref{yksi}}, and let $R\geq 1$. Then
\begin{equation} \label{varsova}
  |g(a)-g(b)|^2 \leq   \frac{(4\pi)^2 \bigl(R^2+\iint_\ID K(z,f)\; dz\bigr)}{\log\left(e+\frac{1}{|a-b|}\right)} \nonumber
\end{equation} 
whenever $a,b\in \ID(R)$.
\end{corollary}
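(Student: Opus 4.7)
The plan is to combine Lemma~\ref{cc26}, applied to the two coordinate functions of $g$, with the global $L^2$ bound of Theorem~\ref{yksi}. Since $f$ is a principal solution, $g=f^{-1}$ is a homeomorphism of $\IC$; hence its real and imaginary parts $g_1,g_2$ are continuous monotone functions. Moreover, Theorem~\ref{yksi} tells us that $g-\mathrm{Id}\in W^{1,2}(\IC)$, so in particular $g\in W^{1,2}_{loc}(\IC)$ and Lemma~\ref{cc26} is applicable to $g_1,g_2$ on any compact set.

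To reduce the case $a,b\in\ID(R)$ to the unit-disk setting of Lemma~\ref{cc26}, I would rescale: set $\tilde g(\zeta)=R^{-1}g(R\zeta)$, so that $\tilde a=a/R$ and $\tilde b=b/R$ belong to $\ID$, and the components of $\tilde g$ are still monotone $W^{1,2}(2\ID)$-functions. Applying Lemma~\ref{cc26} to each coordinate of $\tilde g$ and summing the two inequalities gives
\begin{equation*}
|\tilde g(\tilde a)-\tilde g(\tilde b)|^2\;\leq\;\frac{\pi\int_{3\ID}|D\tilde g|^2\,d\zeta}{\log\!\bigl(e+1/|\tilde a-\tilde b|\bigr)}.
\end{equation*}
Changing variables back, $|g(a)-g(b)|^2=R^2|\tilde g(\tilde a)-\tilde g(\tilde b)|^2$ while $\int_{3\ID}|D\tilde g|^2\,d\zeta=R^{-2}\int_{3\ID(R)}|Dg|^2\,dw$, so the $R^2$ factors cancel. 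Using $R\geq 1$ to replace $1/|\tilde a-\tilde b|=R/|a-b|$ by $1/|a-b|$ in the logarithm yields
\begin{equation*}
|g(a)-g(b)|^2\;\leq\;\frac{\pi\int_{3\ID(R)}|Dg|^2\,dw}{\log\!\bigl(e+1/|a-b|\bigr)}.
\end{equation*}

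It remains to dominate $\int_{3\ID(R)}|Dg|^2$ by $R^2+\int_{\ID}K(z,f)\,dz$, up to a universal constant. This I would do by writing $|Dg|^2\leq 2|D(g-\mathrm{Id})|^2+2|D\mathrm{Id}|^2$: the first term integrates to at most a multiple of $\int_\IC(|g_w-1|^2+|g_{\bar w}|^2)\leq 2\int_\ID K(z,f)\,dz$ by Theorem~\ref{yksi}, while the second contributes a multiple of the area of $3\ID(R)$, i.e.\ a constant times $R^2$. Combining these estimates absorbs everything into the bound $(4\pi)^2(R^2+\int_\ID K)$.

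The only real obstacle is bookkeeping: correctly tracking the constants coming from the rescaling, from the Cauchy--Schwarz step inside the proof of Lemma~\ref{cc26}, and from expanding $|Dg|^2$ around $|D\mathrm{Id}|^2$, so that the final numerical factor fits inside $(4\pi)^2$. This is a routine, if slightly tedious, check; the conceptual content is fully contained in the monotonicity/length–area argument of Lemma~\ref{cc26} together with the $\mu$-independent $L^2$ bound of Theorem~\ref{yksi}.
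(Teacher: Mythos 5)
Your approach is exactly the one the paper intends --- the paper itself gives no proof, only the remark that the corollary is ``obtained easily by combining Lemma~\ref{cc26} with Theorem~\ref{yksi},'' which is precisely what you do: monotonicity of the coordinates of $g$, Gehring's length--area argument on circles centred at $\frac12(a+b)$, and the $\mu$-independent $L^2$ bound for $Dg-\mathrm{Id}$.

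There is, however, one place where the ``routine, if slightly tedious, check'' you defer does \emph{not} go through as sketched. After rescaling and invoking Lemma~\ref{cc26} as a black box you arrive at a bound in terms of $\pi\int_{3\ID(R)}|Dg|^2$, and you then expand $|Dg|^2\le 2|D(g-\mathrm{Id})|^2+2|D\mathrm{Id}|^2$. The constant contribution $2|D\mathrm{Id}|^2$ integrated over $\ID(3R)$ has measure $9\pi R^2$, which, carried through (with $|\nabla g_1|^2+|\nabla g_2|^2=2(|g_w|^2+|g_{\bar w}|^2)$), produces a term $36\pi^2 R^2$ in the numerator --- already larger than the target $16\pi^2 R^2$, no matter how you tune the splitting parameter in $(a+b)^2\le(1+\epsilon)a^2+(1+\epsilon^{-1})b^2$. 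The loss comes from replacing the annulus $\ID(z,\rho_0)\setminus\ID(z,r)$ that actually appears in the proof of Lemma~\ref{cc26} (whose outer radius satisfies $\rho_0\le 2R$, hence area $\le 4\pi R^2$) by the full disk $\ID(3R)$. If instead you run the length--area argument directly for $a,b\in\ID(R)$, taking $\rho_0=\max\{R,|a-b|\}<2R$ and keeping the area term as $\pi\rho_0^2\le 4\pi R^2$, the same splitting gives $8\pi\int_\ID K+16\pi^2 R^2$ in the numerator, which indeed fits inside $(4\pi)^2\bigl(R^2+\int_\ID K\bigr)$. So the conceptual content is exactly right and the proof is sound, but to land on the stated numerical constant you should work from the \emph{proof} of Lemma~\ref{cc26} rather than from its statement after rescaling.
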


\section{Decay of  the Neumann Series.} \label{neumannkasvu}

According to the classical measurable Riemann mapping theorem \cite{B}, \cite{A}, given  a compactly supported Beltrami coefficient  with $|\mu(z)| \leq k < 1$ almost everywhere, the system
 \begin{equation} \label{tuulikki3}
 \frac{\partial f}{\partial \zbar} = \mu(z)  \frac{\partial f}{\partial z}, \quad \quad f \in W^{1,2}_{loc}(\IC),\nonumber
\end{equation}
 always admits a homeomorphic solution $f$  with the development (\ref{degenerate2}). That is, $f$ is a principal solution.
 
This uniformly elliptic equation is most conveniently solved  by applying a Neumann-series argument using the Beurling operator ${\mathcal S}$. This is defined by the principal value integral
$${\mathcal S} \varphi (z) :=  - \frac{1}{\pi}\iint_{\IC}\frac{\varphi(\tau)}{(z-\tau)^2} \; d\tau .
$$
The Beurling operator acts unitarily on $L^2(\IC)$.
Also, we have ${\mathcal S}(h_\zbar) = h_z $ for every  $h \in W^{1,2}(\IC)$.  \bigskip
  
  We then look for  the solution in the form
 $$ f = z + {\mathcal C} (\omega),\qquad\qquad C\omega (z):=\frac{1}{\pi}\iint_{\IC}\frac{\omega(\tau)}{(z-\tau)} \; d\tau,
$$
where ${\mathcal C}$ is the Cauchy transform and  $\omega = f_\zbar\,$ is to satisfy the identity
$$ \omega(z) = \mu(z) {\mathcal S} \omega (z) + \mu(z)  \hskip20pt \mbox{ for almost every $z \in \IC .$}
$$
Such an $\omega$ is easy to find when $\|\mu\|_{\infty} \leq k < 1$. We define
\begin{equation}\label{neumann} 
\omega := (\Id-\mu {\mathcal S})^{-1}\mu = \mu+\mu{\mathcal S}\mu +\mu{\mathcal S}\mu{\mathcal S}\mu+\mu{\mathcal S}\mu{\mathcal S}\mu{\mathcal S}\mu+\cdots 
\end{equation}

\noindent The series converges in  $L^2(\IC)$ since  the operator norm of the $n$:th iterate
\[ \| \mu{\mathcal S}\mu{\mathcal S}\cdots \mu{\mathcal S}  \|_{L^{2}(\IC) \to L^{2}(\IC)} \;\; \leq \;\;k^n, \quad n \in \IN  .\] 
The converging series  provides us with the solution $f$.
However, showing that $f$ is a homeomorphism takes some more effort. For this and other basic facts in the classical theory of quasiconformal maps we refer the reader to \cite{A}, \cite{AIM}  and \cite{LV}.

It is natural to study  how far the above method  carries in the  situation of degenerate equations, even if  now   $\|\mu\|_{\infty} = 1$  and  the Neumann series of $ (\Id-\mu {\mathcal S})^{-1}$ cannot converge in the operator norm. Hence the main question here is the rate of decay for  the $L^2$-norms of the terms in the   series (\ref{neumann}).  
  The next result, building  on  the approach initiated by David \cite{David},  gives an optimal answer.

\begin{theorem}\label{2helsinkinov07} 
Suppose  $|\mu(z)| <  1$ almost everywhere, with $\mu(z) \equiv 0$ for $|z| > 1$. If the distortion function $K(z) = \frac{1+|\mu(z)|}{1-|\mu(z)|}$ satisfies
\begin{equation}
e^{K} \in L^p(\ID), \quad \quad p > 0, \nonumber
\end{equation}
then we have for every $0   <   \beta < p$, 
\begin{equation}
\int_{\IC} \left| (\mu {\mathcal S})^n \mu \right|^2 \;  \leq \; C_0 \,   (n+1)^{-\beta}, \quad \quad  n \in \IN, \nonumber
\end{equation}
where   $C_0 =  C_{p,\beta}\cdot  \int_\ID e^{pK} \,$ with $C_{p,\beta}$ depending only on $\beta$ and $p$.
\end{theorem}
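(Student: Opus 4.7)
My plan is to derive an iterative $L^2$-energy inequality for $L_n := \|\omega_n\|_{L^2(\IC)}^2$, where $\omega_n := (\mu\mathcal{S})^n\mu$, and exploit the exponential integrability of $K$ to force power-law decay. The Neumann operator $\mu\mathcal{S}$ is not a strict contraction on $L^2$ when $\|\mu\|_\infty = 1$, so the decay of $L_n$ must be extracted from the pointwise gap $1-|\mu|^2 = 4K/(K+1)^2$, weighted against the $L^2$-mass of $\mathcal{S}\omega_{n-1}$.

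\textbf{Step 1 (Base recursion).} Since $\omega_n = \mu\mathcal{S}\omega_{n-1}$ is supported in $\ID$ and $\mathcal{S}$ is an $L^2(\IC)$-isometry, the identity $1-|\mu|^2 = 4K/(K+1)^2 \ge 1/K$ gives $L_n \le L_{n-1} - D_n$ with
\[
D_n \;\ge\; \int_\ID \frac{|\mathcal{S}\omega_{n-1}|^2}{K} + \int_{\IC\setminus\ID}|\mathcal{S}\omega_{n-1}|^2 \;=:\; \int_\IC w^{-1}|\mathcal{S}\omega_{n-1}|^2,\qquad w := K\chi_\ID + \chi_{\IC\setminus\ID}.
\]
Writing $|\mathcal{S}\omega_{n-1}|^2 = w^{-1/2}|\mathcal{S}\omega_{n-1}| \cdot w^{1/2}|\mathcal{S}\omega_{n-1}|$ and applying Cauchy--Schwarz yields $L_{n-1}^2 \le D_n \cdot M_n$, hence
\[
D_n \;\ge\; L_{n-1}^2/M_n, \qquad M_n := \int_\ID K|\mathcal{S}\omega_{n-1}|^2 + \int_{\IC\setminus\ID}|\mathcal{S}\omega_{n-1}|^2.
\]

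\textbf{Step 2 (Controlling $M_n$).} The heart of the proof is to show
\[
M_n \;\le\; C(p,\beta)\,L_{n-1}^{\,1-1/\beta} \qquad\mbox{for all } \beta<p \mbox{ and } n \ge n_0(p,\beta).
\]
To achieve this, I would split $\ID$ at a distortion level $t=t(n,\beta)$ into $\{K\le t\}\cup\{K>t\}$. On the good set $w\le t$, contributing at most $tL_{n-1}$. On the bad set Chebyshev gives $|\{K>t\}| \le e^{-pt}\int_\ID e^{pK}$, which is exponentially small; combining with H\"older and an $L^r$-bound on $\mathcal{S}\omega_{n-1}$ for some $r$ slightly above $2$ controls that contribution. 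The required $L^r$-control of the Neumann iterates is obtained by first approximating $\mu$ by the compactly supported truncation $\mu_m := \mu\chi_{\{K\le m\}}$ (so $\|\mu_m\|_\infty<1$ and the series converges geometrically), and using Iwaniec's sharp norm $\|\mathcal{S}\|_{L^r\to L^r} = r-1$ together with interpolation, before passing $m\to\infty$.

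\textbf{Step 3 (Iteration).} The resulting recursion $L_n \le L_{n-1} - c\,L_{n-1}^{1+1/\beta}$ is compared with the ODE $\dot x = -c\,x^{1+1/\beta}$ (explicit solution $x(n) \sim (c n/\beta)^{-\beta}$), producing $L_n \le C_0 (n+1)^{-\beta}$ with the advertised dependence $C_0 = C_{p,\beta}\int_\ID e^{pK}$.

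\emph{The main obstacle} is Step~2: previous approaches \cite{IMbook,IKM} obtained only $\beta < c_1 p$ for some $c_1 \in (0,1)$, because they did not optimise the interplay between truncation level and the $L^r$-theory of $\mathcal{S}$. Reaching \emph{every} $\beta<p$ forces the simultaneous use of Iwaniec's sharp constant and a precise choice of both $t(n)$ and $r(n)$, so that the exponential decay $e^{-pt}$ on the bad set is exactly absorbed by the controlled growth of the iterates' $L^r$-norms.
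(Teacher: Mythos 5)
Your Steps~1 and~3 are sound: the recursion $L_n\le L_{n-1}-L_{n-1}^2/M_n$ (obtained from $1-|\mu|^2\ge 1/K$ and Cauchy--Schwarz with weight $w$) together with the ODE comparison is a legitimate alternative framework, genuinely different from the paper's. The gap is in Step~2, and as stated it is fatal.

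The identity $\|\mathcal{S}\|_{L^r\to L^r}=r-1$ is the \emph{Iwaniec conjecture}, which is \emph{open}; what is proved is only $\|\mathcal{S}\|_{L^r\to L^r}\le c_0(r-1)$ with an absolute constant $c_0>1$. Your balance in Step~2 needs precisely the asymptotic $\|\mathcal{S}\|_{L^{2+\epsilon}\to L^{2+\epsilon}}=1+\epsilon+O(\epsilon^2)$ as $\epsilon\to 0^+$, so that $\|\mathcal{S}\omega_{n-1}\|_{L^{2+\epsilon}}^2\lesssim e^{2n\epsilon}$ can be cancelled by the Chebyshev gain $e^{-pt_n/a}$. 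Any worse multiplicative constant leaves a fixed positive slope in $\log\|\mathcal{S}\|_{L^q}$ at $q=2^+$ and this cannot be absorbed by choosing $r(n),t(n)$. Truncating to $\mu_m$ does not help: since $\|\mu_m\|_\infty\to 1$ as $m\to\infty$, the geometric decay of the truncated series is not uniform. The proven substitute is exactly what the paper uses: express $(\mu\mathcal{S})^n\mu$ as the $(n+1)$st Taylor coefficient of the holomorphic family $\lambda\mapsto f^\lambda_\zbar$ via Cauchy's formula on $|\lambda|=(M-1)/(M+1)$, then apply Astala's area-distortion theorem $|f^\lambda(E)|\le\pi M|E|^{1/M}$. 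This gives
\[
\bigl\|\chi_E\,\mathcal{S}(\mu\mathcal{S})^{n-1}\mu\bigr\|_{2}\;\le\;\sqrt{\pi}\,\Bigl(\tfrac{M+1}{M-1}\Bigr)^{n}\,\tfrac{M+1}{2}\,|E|^{1/(2M)},\qquad M>1,
\]
which is of exactly the same strength as the Iwaniec-conjectural bound for the present purpose but requires only proved facts. A dyadic decomposition of $B_n$ into level sets of $K$ combined with Chebyshev then makes your bad-set term exponentially small.

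There is a second, quantitatively essential, loss: $1-|\mu|^2\ge 1/K$ is off by a factor of $4$, the exact identity being $1-|\mu|^2=4K/(K+1)^2\sim 4/K$. With your $w=K$, the good-set step yields $L_n\le L_{n-1}(1-1/t_n)$ on $\{K\le t_n\}$, hence decay $n^{-1/c}$ for $t_n=cn$, and the bad-set constraint forces $c\ge 4/p$, so the recursion stalls at $\beta\le p/4$ (comparable to the earlier non-optimal results in the literature). You must take the sharp weight $w=\max\{1,(K+1)^2/(4K)\}$ to get $L_n\le L_{n-1}(1-4/t_n)$ and thereby reach every $\beta<p$.

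For comparison: the paper does not run a weighted energy recursion at all. It introduces the auxiliary iterates $g_n=\chi_{G_n}\mu\,\mathcal{S}g_{n-1}$ on good sets $G_n=\{|\mu|\le 1-\beta/(2n+\beta)\}$, gets $\|g_n\|_2\sim n^{-\beta/2}$ from an explicit telescoping product, and controls $\|\psi_n-g_n\|_2$ through a recursion with a remainder $R(n)=\int_{B_n}|(\mu\mathcal{S})^n\mu|^2$ that decays exponentially by the same Cauchy-integral/area-distortion estimate. Your route is structurally cleaner but is driven by the same deep input; the sharp operator-norm bound on $\mathcal{S}$ that you invoke is not available, whereas Astala's area distortion is.
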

\begin{proof}
 Since 
$$   K(z) +1 = \frac{2}{1-|\mu(z)|},
$$
with Chebychev's inequality we have  the measure estimates
\begin{equation} 
 |\{ z \in \ID : |\mu(z)| \geq 1 - \frac{1}{t}  \}| \leq  e^{-2pt} \int_\ID e^{p(K+1)} \, = \, C \,  e^{-2pt}
\nonumber 
\end{equation}
for each $t>1$. With this control on the Beltrami coefficient we will iteratively estimate the terms $ (\mu {\mathcal S})^n \mu$  of the Neumann series. For this purpose we first  fix  the parameter   $0 < \beta < p$, and then for each $n \in \IN$  divide the unit disk into the  "bad" points 
$$ B_n = \{ z \in \ID : |\mu(z)| > 1 - \frac{\beta}{2n+\beta} \}
$$
and the "good" ones, i.e. the complements
$$  G_n = \ID \setminus B_n.
$$
The above bounds on area read now
\begin{equation}
|B_n| \leq  C_1 \,  e^{-4n \, p/ \beta} \, , \quad \quad n \in \IN, \nonumber
\end{equation}
where $C_1 = e^{-p}\int_\ID e^{pK}$.

Next, let us  consider the terms $ \psi_n =  (\mu{\mathcal S})^n \mu \,$  of the Neumann series,  obtained inductively by
\begin{equation} \label{oikeattermit}
\psi_n = \mu {\mathcal S}( \psi_{n-1}), \quad \quad \psi_0 = \mu .\nonumber
\end{equation}
It is helpful to first look at  the following auxiliary terms
\begin{equation} \label{hyvatermit}
g_n =  \chi_{G_n} \, \mu {\mathcal S}( g_{n-1}), \quad \quad g_0 = \mu, \nonumber
\end{equation}
that is, in each iterative step we restrict $\mu$  to the corresponding good part of the disk.

The terms $g_n$ are easy to estimate,
$$\| g_n\|_{L^2}^2 = \int_{G_n} |\mu \,  {\mathcal S}( g_{n-1})|^2 \,  \leq \,  \left( 1 - \frac{\beta}{2n+\beta} \right)^2  \| g_{n-1}\|_{L^2}^2 .
$$
Thus
$$ \| g_n\|_{L^2} \leq   \prod_{j=1}^n  \left( 1 - \frac{\beta}{2j+\beta} \right)  \| \mu \|_{L^2} \,  \leq \,  C_\beta  \, n^{-\beta/2},
$$
where $C_\beta = \sqrt{\pi} \left( 1+ \beta/2 \right)^{\beta/2}$.
For  the true  terms $\psi_n$ decompose 
$$\psi_n - g_n = \chi_{G_n}\mu\,  {\mathcal S}(\psi_{n-1} - g_{n-1}) + \chi_{B_n}\mu  \, {\mathcal S}(\psi_{n-1}).$$   
This  gives the  norm bounds
\begin{equation}\label{joul97}
\| \psi_n - g_n\|_{L^2}^2 \,  \leq \,  \left( 1 - \frac{\beta}{2n + \beta} \right)^2  \| \psi_{n-1} - g_{n-1}\|_{L^2}^2 + R(n),
\end{equation}
where
\begin{equation}\label{joul98}
R(n) =  \| \chi_{B_n}\mu  \, {\mathcal S}(\psi_{n-1})  \|_{L^2}^2 =  \int_{B_n} | (\mu {\mathcal S})^n \mu |^2.
\nonumber
\end{equation}

A main step of the proof is to bound  this last integral term.  A natural approach  is to apply H\"older's inequality and estimate  the norms $\| (\mu{\mathcal S})^n \mu \|_p$ for $p>2$.  This approach can be carried over by applying the spectral bounds on the operator $\mu S$ obtained in \cite{AIS}. On the other hand, these bounds are based on the the area distortion results from \cite{As}. To reveal more clearly the heart of the argument 
  we will  therefore directly use the area distortion. This can be done with a holomorphic representation as follows.
  
 Recall  that the principal solution $f = f^\lambda$ to the equation
\begin{equation} \label{lisakaava7}
  f_\zbar(z)  = \lambda \, \mu(z) f_z(z) 
\end{equation}
  depends holomorphically on the parameter $\lambda \in \ID$. This fact follows from   the Neumann series representation for the derivative,
 \begin{equation} \label{lisakaava8}
  f_\zbar^\lambda = \lambda \mu + \lambda^2 \mu  {\mathcal S} \mu + \cdots + \lambda^n ( \mu  {\mathcal S})^{n-1} \mu + \cdots
\end{equation} 
 The series converges absolutely in $L^2(\IC)$ since $\| \lambda \mu \|_{\infty} \leq |\lambda| < 1$ and ${\mathcal S}$ is an $L^2$-isometry.

The $L^2(\IC)$-valued holomorphic function $\lambda \to  f_\zbar^\lambda $ can as well be represented by the Cauchy integral. 
From this we obtain the following integral representation,
\begin{equation} \label{hotelli1}
\chi_E \,  (\mu{\mathcal S})^n \mu = \frac{1}{2\pi i} \int_{|\lambda| = \rho} \,  \frac{1}{\lambda^{n+2}}  \;  f^\lambda_{\zbar} \, \chi_E \; d\lambda,\quad  \quad E \subset \ID,
\end{equation}
valid for any $0 < \rho < 1$.
We are hence to estimate the norms
$$\|  f^\lambda_{\zbar} \, \chi_E \|_{L^2}^2 = \int_E | f^\lambda_{\zbar} |^2 \leq \frac{|\lambda|^2}{1 - |\lambda|^2} \int_E J(z,f^\lambda ) = \frac{|\lambda|^2}{1 - |\lambda|^2} \,  |f^\lambda (E)|,
$$
 and it is here that the need for a quasiconformal area distortion estimate arises. We have  the uniform bounds   
\begin{equation} \label{tuulikki7}
 |f^\lambda (E)| \leq \pi M |E|^{1/M}, \quad \quad  |\lambda| = \frac{M-1}{M+1},\quad M>1,
\end{equation}
see   \cite{AN}, Theorem  1.6 or    \cite{AIM}, Theorem  13.1.4.

If we now denote  $\rho = |\lambda| = \frac{M-1}{M+1}$ and combine (\ref{hotelli1}) with the above estimates, we end up with the result
\begin{equation} \label{hotelli2}
\|\chi_E \,  (\mu{\mathcal S})^n \mu \|_{2} \, \leq \,  \sqrt{\pi} \left(\frac{M+1}{M-1}\right)^{n}  \frac{M+1}{2}   \, |E|^{1/(2M) .}
\end{equation}
The estimate  is valid for every $M>1$ and for any Beltrami coefficient with $|\mu| \leq \chi_\ID$ almost everywhere. 

For later purposes we recall that the $\partial_z$-derivative, too, has  a  power series representation,
$$f_z^\lambda - 1 = {\mathcal S}f_\zbar^\lambda =   \lambda {\mathcal S} \mu + \lambda^2 {\mathcal S}(\mu  {\mathcal S} \mu) + \cdots + \lambda^n {\mathcal S} ( \mu  {\mathcal S})^{n-1} \mu + \cdots
 $$ 
With an analogous argument  we obtain
\begin{equation} \label{hotelli4}
\| \chi_E {\mathcal S} (\mu{\mathcal S})^n \mu \|_2 \leq  \,  \sqrt{\pi} \left(\frac{M+1}{M-1}\right)^{n+1}  \frac{M+1}{2}   \; |E|^{1/(2M)},
\end{equation}
 an estimate which is similarly valid for every $M>1$.
  \bigskip

Let us then return to estimating  the original Neumann series. Unwinding the iteration in (\ref{joul97}) 
gives
$$ \| \psi_n - g_n\|_{L^2}^2 \,  \leq \,  \sum_{j=1}^n R(j) \prod_{k=j+1}^n \left( 1 - \frac{\beta}{2k+\beta} \right)^2 
\leq 2^\beta  C_{\beta}^{\; 2} \; n^{-\beta} \, \sum_{j=1}^n j^\beta \, R(j) .
$$
It remains to show that the remainder term $R(n)$ decays exponentially. But here we may use the bound (\ref{hotelli2}) for the set $E = B_n$. We have $|B_n| \leq C_1 \,  e^{-4n \, p/ \beta}$ and hence
$$ R(n) \leq 4M^2  \left(\frac{M+1}{M-1}\right)^{2n} \; |E|^{1/M} \leq 4C_1 M^2   \left(\frac{M+1}{M-1}\right)^{2n}  \,  e^{-\frac{4n}{M} \, \frac{ p}{ \beta}}  .
$$
Given $\beta < p$ we can  choose $M >1$ so that 
$$ \log  \left(\frac{M+1}{M-1}\right) - \frac{2}{M} \, \frac{ p}{ \beta} < -  \delta < 0
$$
for some $\delta > 0$. With this choice $R(n) \leq C e^{-2 \delta n} $, $n \in \IN$, where $C = 4M^2 e^{-p} \int_\ID e^{pK}$. This completes the  proof.
\end{proof}
 
\bigskip

According to the Theorem,  the  terms of  the Neumann series decay  with the rate 
$$\|(\mu {\mathcal S})^n \mu \|_2 \leq C_\beta \,  n^{-\beta/2}, \quad \mbox{ for any $\beta < p$}.$$
 If the decay  would be a little better,  of the order   $n^{-\beta}$, then for any $p > 1$  the  series would be norm convergent in $L^2(\IC)$. In view of the example (\ref{kovexample}),  this would immediately determine  $p_0 = 1$ as the critical exponent for the $W^{1,2}$-regularity. 
However, as we will  see later in Section \ref{div}, the  order of decay in  Theorem \ref{2helsinkinov07} cannot be improved. Hence further  means are required for optimal regularity,  to be discussed in  Section \ref{optimal}.

Nevertheless, at the exponent $p_0 = 1$ there is an interesting interpretation of the above bounds,   in terms of the vector-valued   Hardy spaces. Namely, suppose $\mu$ is as in Theorem \ref{2helsinkinov07} and   $p >  \beta > 1$. Then 
\begin{equation} \label{lisakaava11}
  \sum_{n=0}^\infty  \| (\mu {\mathcal S})^n \mu \|^2_{_{L^2}}  < \infty .
\end{equation}
On the other hand, as in  (\ref{lisakaava7}) and (\ref{lisakaava8}),  consider $\lambda \to  f_\zbar^\lambda $ as a  holomorphic vector valued function  $\ID \to L^2(\IC)$. 
With the above bounds  the function has square-summable Taylor coefficients $(\mu {\mathcal S})^n \mu$. In other words, it belongs to the vector-valued Hardy space $H^2(\ID; L^2(\IC))$. 
In particular, since the target Banach space $L^2(\IC)$ has the Radon-Nikodym property, the radial limits 
$$ \lim_{r\to 1}  \frac{\partial f_{r \zeta}}{\partial \zbar} \in L^2(\IC), \quad \quad \zeta \in \IS^1,
$$
exist in $L^2(\IC)$ for almost every $\zeta\in  \IS^1$, see e.g. \cite{Bl}. Similarly for the derivative $\partial_{\zbar} f_{r \zeta}$, and it is not difficult to see that the limiting derivatives are derivatives of a principal solution $f_\zeta$, solving 
\begin{equation} \label{heraajo}
\frac{\partial f_\zeta}{\partial \zbar} =\zeta \,  \mu(z)   \, \frac{\partial f_\zeta}{\partial z} .
\end{equation}
Thus already here we obtain, for $p > 1$ and for almost every $\zeta\in  \IS^1$, the $W^{1,2}_{loc}$-regular solutions to (\ref{heraajo}).
\bigskip

 For exponents  $\beta > 2$, we have via  the norm convergence  (\ref{lisakaava11})  the next auxiliary result, which the reader may recognize as a  distortion of area for mappings in the exponential class.  The result  here is  rather weak but   is a necessary step towards  the optimal  measure distortion bounds which will  be  established in Section \ref{optimal}.

\begin{corollary}\label{1helsinkinov07} 
Suppose $\mu$ and   $0 < \beta < p$ are as in Theorem \ref{2helsinkinov07} and  \begin{equation} \label{vaistyyyo}
\sigma_\mu :=  \sum_{n=0}^\infty  (\mu {\mathcal S})^n \mu  . \nonumber
\end{equation}
If $\beta > 2$, then
$$ \| \chi_E \, \sigma_\mu \|_2 +  \| \chi_E \, {\mathcal S}\sigma_\mu \|_2 \leq C  \log^{1-\beta/2} (e+ \frac{1}{|E|})\, , \quad \quad E \subset \ID,
$$
where $C = C_0 \, C_\beta < \infty$ with $C_\beta$ depending only  $\beta$.

\end{corollary}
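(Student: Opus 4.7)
The plan is to interpolate between two complementary bounds on the Neumann terms $a_n := (\mu \mathcal{S})^n \mu$ furnished by Theorem \ref{2helsinkinov07} and its proof. The first is the algebraic decay $\|a_n\|_{L^2(\IC)} \leq C_0^{1/2}(n+1)^{-\beta/2}$; under the standing hypothesis $\beta > 2$, this is summable in $n$, so $\sigma_\mu$ converges in $L^2(\IC)$ and the series $\mathcal{S}\sigma_\mu$ does as well by the $L^2$-isometry of $\mathcal{S}$. The second is the area-sensitive bound (\ref{hotelli2}),
\[
\|\chi_E a_n\|_{L^2}\;\leq\;\sqrt{\pi}\,\frac{M+1}{2}\, r^n\, |E|^{1/(2M)},\qquad r=\frac{M+1}{M-1},
\]
valid for every $M>1$ and every measurable $E\subset \ID$.

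Fix $M=2$, so $r=3$ and $\alpha:=1/(2M)=1/4$. The idea is to split the series at a cutoff $N=N(|E|)$ to be chosen, and estimate
\[
\|\chi_E \sigma_\mu\|_{L^2} \;\leq\; \sum_{n=0}^{N}\|\chi_E a_n\|_{L^2} \;+\; \sum_{n=N+1}^{\infty}\|a_n\|_{L^2}.
\]
The head is a geometric sum controlled by a constant multiple of $r^{N+1}|E|^{\alpha}$, while the tail is handled by integral comparison, yielding a bound of the form $C_0^{1/2}\, C_\beta\, N^{1-\beta/2}$ (this is where $\beta > 2$ enters a second time, to make the tail integral finite). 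Choosing $N = \lceil \alpha(2\log r)^{-1}\log(1/|E|)\rceil$ arranges $r^N|E|^{\alpha}\leq |E|^{\alpha/2}$, so the head is $O(|E|^{\alpha/2})$. This decays super-polynomially in $\log(1/|E|)$ and is therefore dominated by $\log^{1-\beta/2}(e+1/|E|)$ once $|E|$ is small, while the tail contributes precisely the target rate $\sim (\log(1/|E|))^{1-\beta/2}$.

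The bound on $\|\chi_E \mathcal{S}\sigma_\mu\|_{L^2}$ is obtained identically, by using the companion estimate (\ref{hotelli4}) in place of (\ref{hotelli2}) for the head and the isometry identity $\|\mathcal{S}a_n\|_{L^2}=\|a_n\|_{L^2}$ for the tail. It remains only to dispose of values of $|E|$ that are not small, say $|E|\geq 1/e$: there the right-hand side $\log^{1-\beta/2}(e+1/|E|)$ is bounded below by a positive constant (because $1-\beta/2<0$), while by summability of the global estimate both $\|\sigma_\mu\|_{L^2}$ and $\|\mathcal{S}\sigma_\mu\|_{L^2}$ are finite constants, so the inequality is automatic. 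I do not anticipate any serious obstacle: the whole argument is a clean splitting, and the only calibration is the choice of the cutoff $N$ to balance the (exponentially small) geometric head against the (polynomially small in $\log(1/|E|)$) tail.
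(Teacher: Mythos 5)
Your argument is essentially identical to the paper's: you use the algebraic decay from Theorem \ref{2helsinkinov07} for the tail of the series, the exponential-in-$n$ area-distortion bounds (\ref{hotelli2}) and (\ref{hotelli4}) (plus the $\mathcal{S}$-isometry) for the head, and then split at a cutoff $N$ proportional to $\log(1/|E|)$. The only differences are cosmetic — you take $M=2$ where the paper fixes $M=3$, and you make the head exponentially smaller than the tail rather than balancing the two terms to be of the same order, neither of which changes the bound.
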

\begin{proof}  Theorem \ref{2helsinkinov07} and its proof give us  two different ways to estimate the terms. First,
$$\|\chi_E  (\mu{\mathcal S})^n \mu \|_2  +  \| \chi_E  {\mathcal S}(\mu{\mathcal S})^n \mu \|_2  \leq  2 \| (\mu{\mathcal S})^n \mu\|_2  \leq 2 C_0 \, (n+1)^{-\beta/2} .
$$ 
Summing this up gives
$$ \sum_{n=m+1}^\infty \|\chi_E  (\mu{\mathcal S})^n \mu \|_2  +  \| \chi_E  {\mathcal S}(\mu{\mathcal S})^n \mu \|_2 \,  \leq \,  \frac{4 \, C_0}{\beta - 2} \;  m^{1-\beta/2} \, , \quad \quad m \in \IN .
$$
Secondly, let us choose, say, $M = 3$ in  (\ref{hotelli2}) and  (\ref{hotelli4}). Then for every $n \in \IN$ we have the estimates 
 $ \| \chi_E  (\mu{\mathcal S})^n \mu\|_2 +  \| \chi_E  {\mathcal S}(\mu{\mathcal S})^n \mu \|_2   \leq  6 \sqrt{\pi}  \cdot 2^{n}\,  |E|^{1/6}$.
This, in turn, leads to
  $$ \sum_{n=0}^m \|\chi_E  (\mu{\mathcal S})^n \mu \|_2  +  \| \chi_E  {\mathcal S}(\mu{\mathcal S})^n \mu \|_2 \leq  6 \sqrt{\pi}  \cdot 2^{m+1} |E|^{1/6} .
 $$
 Combining we arrive at
 \begin{equation} \label{lentsikassa}
 \| \chi_E \, \sigma_\mu \|_2 +  \| \chi_E \, {\mathcal S}\sigma_\mu \|_2 \leq  \,  \frac{4 \, C_0}{\beta - 2} \;  m^{1-\beta/2}  + 22\cdot 2^m |E|^{1/6} .\nonumber
\end{equation}
The bound holds for any integer $m \in \IN$, but of course it is   optimal when the two terms on the right hand side are roughly equal. We let
$$ \frac{1}{10}   \log (e+ \frac{1}{|E|}) \leq m < 1 +  \frac{1}{10}   \log (e+ \frac{1}{|E|}),
$$
and with this choice the terms on the right hand side of (\ref{lentsikassa}) are both smaller than $C_0 \,C_\beta 
  \log^{1-\beta/2} (e+ \frac{1}{|E|})$, where the constant  $C_\beta$ depends only on the parameter $\beta > 2$.
 \end{proof}
 
\bigskip

\section{The Degenerate Measurable Riemann Mapping Theorem}\label{above}

We will next give a simple proof of David's theorem, the  generalization of the Measurable Riemann Mapping Theorem to the setting of exponentially integrable distortion. We give this presentation  not only for the reader's convenience, but also since a few  bits and pieces of the  proof will  be needed in the main result, the optimal regularity Theorem \ref{22.hkinov07}.

We start with the case  where the exponential distortion $e^K \in L^p_{loc}$ for  some $p > 2$.  The key idea here is  that for such large $p$, the Neumann series (\ref{neumann}) converges in $L^2(\IC)$, and this quickly yields homeomorphic $W^{1,2}$-regular solutions to the corresponding Beltrami equation (\ref{tuulikki2}). From Section \ref{modcont} we already know the usefulness of such regularity, providing us with general equicontinuity properties. 

After the case of high exponential integrability, together with a few further consequences  established, we then return to the general   measurable Riemann mapping theorem and prove this towards  the end of this section.

\begin{theorem}\label{Existence}
If  $\mu$ is a Beltrami coefficient such that 
\[ |\mu(z)| \leq \frac{K(z)-1}{K(z)+1} \; \chi_\ID  , \] 
where
\begin{equation}
 e^{K} \in L^p(\ID) \hskip15pt {\rm for \,\, some \,\, } p >  2, \nonumber
 \end{equation}
 then the Beltrami equation 
 \[ \frac{\partial f}{\partial \zbar} = \mu(z) \frac{\partial f}{\partial z}\hskip20pt  \mbox{ for almost every $z\in \IC$}  \] 
 admits a unique principal solution $f \in W^{1,2}_{loc}(\IC)$.
\end{theorem}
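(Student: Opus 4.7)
My plan has three parts: build a $W^{1,2}_{loc}$-solution by summing the Neumann series, upgrade it to a homeomorphism by an approximation argument based on Section \ref{modcont}, and deduce uniqueness from the generalized Stoilow factorization.

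Since $p > 2$, I fix an exponent $\beta$ with $2 < \beta < p$. Theorem \ref{2helsinkinov07} then provides the summable bound $\|(\mu\mathcal{S})^n\mu\|_{L^2(\IC)} \leq C_0(n+1)^{-\beta/2}$, so the series $\omega := \sum_{n \geq 0}(\mu\mathcal{S})^n\mu$ converges absolutely in $L^2(\IC)$, is supported in $\ID$, and satisfies $\omega = \mu + \mu\mathcal{S}\omega$ almost everywhere. Setting $f(z) := z + \mathcal{C}\omega(z)$ gives $f_\zbar = \omega$ and $f_z = 1 + \mathcal{S}\omega$, so the Beltrami equation $f_\zbar = \mu f_z$ holds a.e. with $f_\zbar, f_z - 1 \in L^2(\IC)$; hence $f \in W^{1,2}_{loc}(\IC)$. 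The compact support of $\omega$ yields the principal expansion (\ref{degenerate2}) at infinity via the elementary Cauchy-kernel expansion.

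To promote $f$ to a homeomorphism I truncate, $\mu_k := \mu\chi_{\{|\mu| \leq 1 - 1/k\}}$, so $\|\mu_k\|_\infty < 1$ and the classical measurable Riemann mapping theorem yields a principal homeomorphism $f_k \in W^{1,2}_{loc}(\IC)$ for each $\mu_k$. Because $K(\cdot, f_k) \leq K(\cdot, f)$ pointwise, Theorem \ref{2helsinkinov07} applies with uniform constants to every $\mu_k$; a telescoping argument in $L^2$ (using $\mu_k \to \mu$ in every $L^q$, $q < \infty$, and the $L^2$-boundedness of $\mathcal{S}$) gives $f_k \to f$ in $W^{1,2}_{loc}(\IC)$. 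The inverses $g_k := f_k^{-1}$ satisfy, by Theorem \ref{yksi}, the uniform bound $\int_\IC(|g_{k,\bar w}|^2 + |g_{k,w}-1|^2)\,dw \leq 2\int_\ID K(z)\,dz < \infty$, and Corollary \ref{apuva} furnishes a uniform modulus of continuity for $\{g_k\}$ on every disk $\ID(R)$. An Arzel\`a--Ascoli extraction produces $g_{k_j} \to g$ locally uniformly to a non-constant $W^{1,2}_{loc}$-map retaining the principal asymptotic, and a standard local-degree argument for uniformly convergent orientation-preserving principal homeomorphisms of $\IC$ forces $g$ to be a homeomorphism. Inverting, $f_{k_j} = g_{k_j}^{-1} \to g^{-1}$ locally uniformly; comparing with the $L^2_{loc}$-limit $f$ identifies $f$ (after correction on a null set) with the continuous map $g^{-1}$.

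For uniqueness, any other principal $W^{1,2}_{loc}$-solution $\tilde f$ lies \emph{a fortiori} in $W^{1,Q}_{loc}(\IC)$, so the Iwaniec--Martin Stoilow factorization (Theorem \ref{genstoilow}) writes $\tilde f = \phi \circ f$ for some $\phi$ holomorphic on $f(\IC) = \IC$; the principal normalization forces $\phi(z) = z + O(1/z)$ at infinity, and Liouville gives $\phi(z) \equiv z$, whence $\tilde f = f$. The main obstacle is the second step: the Neumann construction only delivers a Sobolev function, and the approximants $f_k$ are \emph{not} uniformly quasiconformal (since $\|\mu_k\|_\infty \to 1$), so classical compactness cannot be applied to $\{f_k\}$ themselves. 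The role of the Sver\'ak-type identity (Theorem \ref{yksi}) together with the Gehring-type estimate (Corollary \ref{apuva}) is precisely to transfer the missing compactness to the inverse family $\{g_k\}$, where uniform bounds survive, and this is what makes the approximation scheme close.
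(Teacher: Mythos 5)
Your overall architecture mirrors the paper's: truncate $\mu$, invoke the classical Measurable Riemann Mapping Theorem for the approximants, use Theorem~\ref{2helsinkinov07} for uniform $L^2$ bounds (with $2<\beta<p$), transfer compactness to the inverses via Theorem~\ref{yksi} and Corollary~\ref{apuva}, pass to a limit, and settle uniqueness with Theorem~\ref{genstoilow}. However there is a genuine gap at the step you flag as the ``standard local-degree argument.''

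The assertion that a locally uniform limit $g$ of orientation-preserving principal homeomorphisms of $\IC$ is automatically a homeomorphism is false. Degree theory gives you that $g$ has degree $1$ at every regular value, hence that $g$ is \emph{surjective}, but it gives no injectivity: locally uniform limits of homeomorphisms can easily collapse continua to points (consider $g_k(z)=\frac{z}{|z|}|z|^{1/k}$ on $\ID$, $g_k(z)=z$ outside, which are quasiconformal principal maps converging locally uniformly to the non-injective map $z\mapsto z/|z|$). Your $g_k$ do carry extra structure — uniformly bounded $\|Dg_k-\Id\|_{L^2(\IC)}$ from Theorem~\ref{yksi} and the modulus of continuity from Corollary~\ref{apuva} — but you never invoke those in this step, and a bare degree count does not deliver injectivity. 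Consequently the subsequent inversion ``$g_{k_j}^{-1}\to g^{-1}$'' is built on an unproved premise.

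The paper closes this exact gap differently and more economically: it applies Lemma~\ref{cc26} to the forward maps $f^m$ as well. The $f^m$ are homeomorphisms, so their coordinate functions are monotone, and the uniform $W^{1,2}$ bounds (\ref{tuulikki4}) then give equicontinuity of $\{f^m\}$ on compacta. After passing to a subsequence one has simultaneous locally uniform convergence $f^m\to f$ and $g^m\to g$, and taking the limit in the identity $g^m\bigl(f^m(z)\bigr)=z$ yields $g\circ f=\mathrm{id}$; injectivity of $f$ follows at once, and with the principal normalisation at infinity one concludes $f$ is a homeomorphism of $\IC$. In short, you should also extract equicontinuity of the \emph{forward} approximants from Lemma~\ref{cc26} (your $W^{1,2}_{loc}$ convergence alone does not give it) and argue through the composition identity, rather than trying to certify the limit $g$ as a homeomorphism in isolation. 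The remaining pieces of your proposal — the Neumann-series construction of $f$, the uniform bounds via $K(\cdot,f_k)\le K(\cdot,f)$, and the Stoilow/Liouville uniqueness argument — are sound and match the paper.
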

 
 \begin{proof} 
 Consider the following good approximations  for the Beltrami coefficient $\mu(z)$, 
 \begin{equation}\label{ei19}
\mu_m(z) = \left\{ \begin{array}{ll} 
\mu(z) & \mbox{ if $|\mu(z)|\leq 1-\frac{1}{m}$ } \\
(1-\frac{1}{m})\;\frac{\mu(z)}{|\mu(z)|} & \mbox{ otherwise,} \end{array} \right.
\end{equation}
defined for $m=1,2,\ldots$.  Of course $|\mu_m(z)|\leq 1-\frac{1}{m} < 1$ and we also have the bound
\[
|\mu_m(z)|\leq \frac{K(z)-1}{K(z)+1},
\]
which is independent of $m$.
  Similarly the corresponding distortion functions satisfy
 \[
 K_m(z) = \frac{1+|\mu_m(z)|}{1-|\mu_m(z)|} \leq  \frac{1+|\mu(z)|}{1-|\mu(z)|} = K(z), \;\;\;\;\; m = 1,2,\dots
 \]
 Note that each $ K_m(z) \in L^\infty(\IC)$.
 
 With the classical measurable Riemann mapping theorem, see \cite{B},  \cite{A}, \cite{AIM} or \cite{LV},  we have  unique principal solutions $f^m:\IC \to \IC$ to the Beltrami equation 
$$ (f^m)_\zbar=\mu_m \; (f^m)_z, \hskip20pt m = 1,2,\dots$$
Moreover,
$$ f^m = z + {\mathcal C} (\omega_m),
$$
where $\omega_m = (f^m)_\zbar\,$ satisfies the identity
$$ \omega_m(z) = \mu_m(z) ({\mathcal S} \omega_m)(z) + \mu_m(z)  \hskip20pt \mbox{almost every $z \in \IC$} .
$$
In order for these approximate solutions $f^m$  to converge we need uniform $L^2$-bounds for their derivatives; once these have been established  the proof  follows  quickly. 

We use Theorem \ref{2helsinkinov07}. This gives for any  $2 < \beta  < p $ the estimates
$$
  \| (\mu_m{\mathcal S})^k \mu_m\|_2   \leq  C_m \, k^{-\beta/2}, \quad \quad k \in \IN .
$$
The theorem also gives uniform bounds  for the constant term $C_m$.
Since the approximate solutions have distortion $K_m \leq K$ pointwise, we have
$$C_m  \leq \sqrt{C_{p, \beta}} \left( \int_{\ID} e^{pK_m}\right)^{1/2}  \leq \sqrt{C_{p, \beta}} \left( \int_{\ID} e^{pK}\right)^{1/2}=\sqrt{C_0} ,$$
where $C_0$  is as in Theorem \ref{2helsinkinov07}. Hence if   $e^{K} \in L^p(\ID)$ for some $p>2$, 
 we  obtain $L^2$-estimates for the sums (\ref{neumann}),
 \begin{equation} \label{hotelli11}
\| (f^m)_\zbar \|_2 \leq \sum_{k=0}^\infty  \| (\mu_m{\mathcal S})^k \mu_m\|_2   \leq  \,  \sqrt{C_0}  \sum_{k=0}^\infty (k+1)^{-\beta/2}.
\end{equation}
Hence also  $$\| (f^m)_z - 1 \|_2 = \| (f^m)_\zbar \|_2 \leq \sqrt{C_0} \sum_{k=0}^\infty (k+1)^{-\beta/2}.
 $$
This easily yields that
 \begin{equation} \label{tuulikki4}
 \int_{\ID(R)} |Df^m |^2  \leq 4 \pi R^2 + A, \hskip20pt R > 1,
\end{equation}
where $A < \infty$ depends only on $ \beta>2 $ and $\int_{\ID} e^{pK}$.

Now Lemma  \ref{cc26} applies and shows that on compact subsets of the plane the sequence $\{ f^m \}$ has a uniform modulus of continuity. Changing to a subsequence, and invoking (\ref{tuulikki4})  again we may assume that $f^m(z) \to f(z)$ locally uniformly  with the derivatives $D f^m$ converging weakly in $L^{2}_{loc}(\IC)$.  
 Their weak limit necessarily equals $Df$. Furthermore,  since for any $R>1$ and $\varphi \in C^{\infty}_0(B(0,R))$ 
$$ \int_\IC \varphi \bigl( f^m_\zbar - \mu \, f^m_z \bigr) =  \int_\IC \varphi \bigl( \mu_m - \mu \bigr) \, f^m_z  \leq \frac{  \sqrt{\pi} \,  \| \varphi \|_\infty}{m} \, \sqrt{4 \pi R^2 + A} \to 0
$$
by (\ref{tuulikki4}), we see  that $f$ is a solution to the Beltrami equation
$$ f_\zbar=\mu \; f_z  \hskip20pt  \mbox{for almost every $z \in \IC$}.$$

On the other hand, we can use  Corollary \ref{apuva} for the inverse mappings $g_m = (f^m)^{-1}$ and see that $g^m(z) \to g(z)$ uniformly on compact subsets of the plane, where $g:\IC \to \IC\;$ is continuous. From $g^m\left(f^m(z)\right) = z$  we have $g \circ f(z) = z$. That is, $f$ is a homeomorphism, hence the principal solution we were looking for.

Lastly, for the uniqueness we refer to the following very general Stoilow factorization result from the monograph \cite{IMbook}; see also \cite{AIM}, Section 20.4.8. The proof of Theorem \ref{Existence} is now complete.
 \end{proof}

\begin{theorem} \label{genstoilow} {\rm (\cite{IMbook}, Theorem 11.5.1)}. Let 
\begin{equation} \label{kuu}
 Q(t) = \frac{t^2}{\log(e + t)} 
\end{equation}
and suppose we are given a homeomorphic solution $f \in W^{1,Q}_{loc}(\Omega)$ to the Beltrami equation
\begin{equation}\label{A94}
f_\zbar = \mu(z) f_z, \hskip20pt  \;\;\; z \in \Omega,
\end{equation}
where $| \mu(z)| < 1$ almost everywhere. Then every other solution $h \in W^{1,Q}_{loc}(\Omega)$ to {\rm (\ref{A94})} takes the form
\[  h(z)= \phi(f (z)), \hskip20pt z\in \Omega , \] 
where $\phi:f(\Omega)\to \IC$ is holomorphic.
\end{theorem}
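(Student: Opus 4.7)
The plan is to verify the ansatz $\phi := h\circ f^{-1}$: since $f$ is a homeomorphism onto $f(\Omega)$, the function $\phi:f(\Omega)\to\IC$ is well defined and continuous, so it is enough to show that $\phi$ is holomorphic. By Weyl's lemma, this reduces to proving $\phi_{\bar w}=0$ in the sense of distributions on $f(\Omega)$.

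First I would collect the regularity properties of $W^{1,Q}_{loc}$ homeomorphisms, where $Q(t)=t^2/\log(e+t)$. By the argument leading to (\ref{D17.72}) and Theorem 7.2.1 of \cite{IMbook}, the Jacobian $J(\cdot,f)$ is locally integrable, and this is precisely strong enough for the change-of-variables formula (together with a.e.\ differentiability and the Lusin $(N)$ property on the set $\{J(\cdot,f)>0\}$, valid for planar Sobolev homeomorphisms). The Beltrami identity $|f_{\bar z}|=|\mu||f_z|$ combined with $|\mu|<1$ a.e.\ gives $J(z,f)=|f_z|^2(1-|\mu|^2)\geq 0$, and since $f$ is a finite-distortion homeomorphism, standard arguments show $J(z,f)>0$ almost everywhere in $\Omega$.

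A formal chain-rule computation is the heart of the argument. Differentiating $h(z)=\phi(f(z))$ and using the Beltrami equations for $h$ and $f$ produces
\[
0 \;=\; h_{\bar z}-\mu\, h_z \;=\; \phi_{\bar w}(f(z))\,\overline{f_z(z)}\,(1-|\mu(z)|^2).
\]
Since the rightmost factor is nonzero a.e., this suggests $\phi_{\bar w}\equiv 0$ pointwise. The main obstacle is to interpret this rigorously under only $W^{1,Q}_{loc}$ regularity, where the classical chain rule is not immediately available.

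The rigorous step I would carry out is distributional. Given $\eta\in C^\infty_0(f(\Omega))$, write
\[
\int_{f(\Omega)}\phi\,\eta_{\bar w}\,dw \;=\; \int_{\Omega} h(z)\,(\eta_{\bar w}\circ f)(z)\,J(z,f)\,dz
\]
by change of variables. Expressing $(\eta_{\bar w}\circ f)\,J(z,f)$ in terms of the weak partial derivatives of the compactly supported $W^{1,Q}$-function $\eta\circ f$ (using the Beltrami equation for $f$), and integrating by parts against $h$ (using the Beltrami equation for $h$), the resulting terms cancel because $f_{\bar z}-\mu f_z=0=h_{\bar z}-\mu h_z$. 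Hence $\int\phi\,\eta_{\bar w}=0$ for every test function $\eta$, so $\phi_{\bar w}=0$ weakly on $f(\Omega)$, and Weyl's lemma upgrades $\phi$ to a holomorphic function. This gives the desired factorization $h=\phi\circ f$.
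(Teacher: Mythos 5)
The paper does not prove this statement: Theorem~\ref{genstoilow} is imported verbatim as Theorem 11.5.1 of \cite{IMbook} (with a pointer to \cite{AIM}, Section 20.4.8), so there is no in-text proof to compare against. Evaluated on its own, your plan correctly identifies the goal, namely to show $\phi_{\bar w}=0$ distributionally and invoke Weyl's lemma, and your formal chain-rule identity is the right local mechanism. But the rigorous step you sketch does not close. After the change of variables one has
\[
\int_{f(\Omega)}\phi\,\eta_{\bar w}\,dw \;=\; \int_{\Omega} h\,(\eta_{\bar w}\circ f)\,J(z,f)\,dz \;=\; \int_\Omega h\,\bigl[f_z\,(\eta\circ f)_{\bar z}-f_{\bar z}\,(\eta\circ f)_z\bigr]\,dz,
\]
and there is no legitimate integration by parts that moves the weak derivative onto $h$ here, because the factor $f_z$ sits in the way and $h f_z$ has no weak derivatives. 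If one instead uses the null-Lagrangian rewriting and then transfers the derivative to $h$, one is led to terms like $\int h_{\bar z}\,(\eta\circ f)\, f_z\,dz$ in which $h_{\bar z}$ and $f_z$ each lie only in the Orlicz class $L^Q$ with $Q(t)=t^2/\log(e+t)$; the product of two such functions is not in $L^1_{\rm loc}$ in general (take $g(z)=|z|^{-1}|\log|z||^{-1/2}$ near $0$: $g\in L^Q$ but $g^2\notin L^1$). So the "terms cancel because of the Beltrami equations'' is a purely formal statement: the intermediate integrals one would need to cancel are not even finite at the $W^{1,Q}$ level. This is precisely the critical phenomenon that makes the $W^{1,Q}$ Stoilow factorization delicate, in contrast to the classical $W^{1,2}$ version where the same computation is trivially licensed by Cauchy--Schwarz.

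Beyond this, several facts you cite as standard are themselves substantial theorems at this borderline regularity: that a $W^{1,Q}_{\rm loc}$ homeomorphism of finite distortion has $J(\cdot,f)>0$ a.e., satisfies Lusin's condition~(N), is differentiable a.e., and obeys the change-of-variables formula. Establishing these is a significant part of what the proof in \cite{IMbook} actually does, via approximation by uniformly elliptic Beltrami equations and careful Jacobian estimates, rather than a direct distributional chain rule. Your outline is therefore a reasonable heuristic for what must be true, but it is not a proof; the genuine difficulty (integrability of the mixed products at the $L^2\log^{-1}L$ scale) is exactly the step it glosses over.
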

\bigskip

In Theorem \ref{Existence} we constructed  the mapping $f$ through a limiting process rather than  directly by the Neumann series. Nevertheless, the representations
\begin{equation} \label{hotelli12}
 f_\zbar = \sum_{n=0}^\infty (\mu{\mathcal S})^n \mu, \quad \quad f_z -1 = \sum_{n=0}^\infty {\mathcal S}(\mu{\mathcal S})^n \mu
\end{equation}
are still valid. Namely, by Theorem \ref{2helsinkinov07}  the sums are absolutely convergent in $L^2(\IC)$.  Since  the approximate coefficients  $\mu_m \to \mu$ in $L^\infty(\IC)$,  the terms $(\mu_m{\mathcal S})^k \mu_m  \to (\mu {\mathcal S})^k \mu$ weakly in $L^2(\IC)$. As the derivatives of $f^m$ are representable by the corresponding  Neumann series,    the identities (\ref{hotelli12})  follow  with the help of the uniform bounds (\ref{hotelli11}).

In studying the general exponentially integrable distortion we need to know that the mappings of Theorem \ref{Existence} preserve sets of Lebesgue measure zero. This follows easily from the above.
 
\begin{corollary} \label{lennossa3}
 If $\mu$ and the principal solution $f\in W^{1,2}_{loc}(\IC)$ are as in Theorem \ref{Existence}, then $f$ has the properties ${\mathcal N}$ and ${\mathcal N}^{- 1}$, i.e. for any measurable set $E\subset \IC$,
$$ |f(E)| = 0 \quad \Leftrightarrow \quad |E| = 0 .$$
\end{corollary}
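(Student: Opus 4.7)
The plan is to derive both $\mathcal{N}$ and $\mathcal{N}^{-1}$ from the area formula
\[
|h(E)| \;=\; \int_E J(z,h)\, dm(z),
\]
which holds for every planar $W^{1,2}_{loc}$ homeomorphism $h$. Granted this formula, the inclusion $|E|=0 \Rightarrow |h(E)|=0$ is immediate. So it suffices to show that both $f$ and its inverse $g = f^{-1}$ lie in $W^{1,2}_{loc}(\IC)$, and then apply the area formula to each. That $f\in W^{1,2}_{loc}(\IC)$ is already part of Theorem \ref{Existence}, so property $\mathcal{N}$ for $f$ is at hand.

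For property $\mathcal{N}^{-1}$ the plan is to prove $g\in W^{1,2}_{loc}(\IC)$ by passing to the limit in the approximation scheme of Theorem \ref{Existence}. Recall that $f^m\to f$ locally uniformly, where $f^m$ is the classical principal solution for the truncated coefficient $\mu_m$ from (\ref{ei19}), and the associated distortion satisfies $K_m \le K$ pointwise. Since $e^{K}\in L^p(\ID)$ we certainly have $K\in L^1(\ID)$, so Theorem \ref{yksi} yields the uniform bound
\[
\int_\IC \bigl(|(g_m)_{\bar w}|^2 + |(g_m)_w - 1|^2\bigr)\, dw \;\le\; 2\int_\ID K_m(z)\, dz \;\le\; 2\int_\ID K(z)\, dz \;<\; \infty.
\]
Combining this with the locally uniform convergence $g_m\to g$ supplied by Corollary \ref{apuva}, a weak compactness argument in $L^2_{loc}$ identifies the weak limit of $Dg_m$ with $Dg$, whence $g\in W^{1,2}_{loc}(\IC)$. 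Applying the area formula to $g$ gives property $\mathcal{N}$ for $g$, i.e.\ $|F|=0\Rightarrow|g(F)|=0$; specialising to $F=f(E)$ yields exactly property $\mathcal{N}^{-1}$ for $f$.

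The one ingredient used as a black box is the area formula for planar $W^{1,2}_{loc}$ homeomorphisms; this is the borderline Sobolev exponent in dimension two, but the formula is classical (see for instance \cite{IMbook} or \cite{AIM}), and is where one must be a little careful since property $\mathcal{N}$ can fail for Sobolev homeomorphisms below the critical exponent. Once that is invoked, the argument is otherwise routine, reducing everything to the uniform distortion bound from Theorem \ref{yksi} and the convergence machinery already developed in the proof of Theorem \ref{Existence}.
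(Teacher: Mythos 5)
Your proposal is correct and follows essentially the same route as the paper: invoke null-set preservation (equivalently, the area formula) for planar $W^{1,2}_{loc}$ homeomorphisms, note $f\in W^{1,2}_{loc}$ from Theorem \ref{Existence}, and use Theorem \ref{yksi} to get $g=f^{-1}\in W^{1,2}_{loc}$. The only difference is that you carefully pass through the approximants $g_m=(f^m)^{-1}$ and a weak-compactness argument to justify $g\in W^{1,2}_{loc}$, whereas the paper applies Theorem \ref{yksi} directly and tersely (its stated hypothesis $\|\mu\|_\infty<1$ does not literally hold here, so your limiting argument is in fact the implicit justification the paper is appealing to).
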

\begin{proof}
Any  homeomorphism with the $W^{1,2}_{loc}$-regularity preserves Lebesgue null sets, see \cite{MM},  or   \cite{AIM} for an elementary proof.  We have shown that $f \in W^{1,2}_{loc}(\IC)$, and since $K(\cdot, f) \in L^1(\ID)$,  Theorem \ref{yksi}  verifies that the inverse map $g= h^{-1}$ belongs to $W^{1,2}_{loc}(\IC)$. The claim follows.
 \end{proof}

Concerning the existence of solutions, the situation  is rather different in the general case  where the integrability exponent of $e^K$ is small. Recall that   by
example  (\ref{kovexample})  the corresponding principal solutions need not be in $W^{1,2}_{loc}(\IC)$.
 Instead,  inequality (\ref{D17.72})  leads us to look for
principal solutions $f$ in $W^{1,Q}_{loc}(\IC)$, where $Q(t)$ is given in (\ref{kuu}).
The idea of the following proof is then to reduce the argument  to Theorem \ref{Existence} via a suitable
factorization.
\smallskip

\begin{proof}[Proof of Theorem \ref{Existence2}.] 
Given  a Beltrami coefficient $\mu=\mu (z)$ with the distortion function $K(z)  = \frac{1+ |\mu(z)|}{1-|\mu(z)|}$, assume that $e^K \in L^p(\ID)$ for some $0 <p \leq  2$. It is very suggestive to write  
\[ K(z)=\frac{3}{p}\cdot \frac{p
K(z)}{3} .\]
Then $K_1(z)= p K(z)/3$ satisfies the hypotheses of Theorem \ref{Existence} 
and the constant  factor $K_2(z)= 3/p > 1$ could be  represented as the distortion of a quasiconformal mapping. There is one problem here in that at points where $K(z)$ is already finite and perhaps small,  the factor  $K_1(z)$ might be less than $1$ and so cannot be a distortion function.  We will get around this point by constructing the related Beltrami coefficients  using the  hyperbolic geometry.

Let $M > 1$. For each $z\in\ID$  choose a point $\nu = \nu(z)$ on the radial segment determined by $ \mu(z)$, so that 
\begin{equation}\label{nuchoice}
\rho_\ID(0,\nu) + \rho_\ID(\nu,\mu) \, = \,  \rho_\ID(0,\mu)  \, = \,  \log \frac{1+|\mu|}{1-|\mu|}.
\nonumber
\end{equation}
 If $\rho_\ID(0,\mu) > \log M$ we require $\rho_\ID(\nu,\mu)= \log M$ and  otherwise we set $\nu = 0$.
In any case we always have 
\[ M K_\nu=e^{\log M} e^{\rho_\ID(0,\nu)}  \leq K_\mu + M. \]
It follows that 
\begin{equation} \label{lennossa2}
\int_\ID e^{p  M K_\nu} \leq e^{p M} \int_\ID e^{p   K_\mu} < \infty ,
\end{equation}
so  that $\nu$ satisfies the hypotheses of Theorem  \ref{Existence}  as soon as we choose  $ M = \frac{3}{p}$.  We can therefore solve the Beltrami equation for $\nu = \nu(z)$ to get a principal mapping $F$ of class $W^{1,2}(\ID)$.  Next, set 
\begin{equation}\label{kapdefdeg}
\kappa (w)  = \frac{\mu(z) - \nu(z)}{1-\mu(z){\overline{\nu(z)}}}\; \left(\frac{F_z}{|F_z|}\right)^2, \quad \quad w = F(z), \; z \in \IC. 
\end{equation}
According to Corollary \ref{lennossa3} $\kappa$ is well defined almost everywhere. We also  see that
\begin{equation}
\frac{1+|\kappa|}{1-|\kappa|} = \frac{1+\left|\frac{\mu - \nu}{1-\mu\bar\nu}\right|}{1-\left|\frac{\mu - \nu}{1-\mu\bar\nu}\right|}  = e^{\rho_\ID(\nu,\mu)} \leq M = \frac{3}{p} < \infty .\nonumber
\end{equation}
 Thus we may solve the uniformly elliptic Beltrami equation for $\kappa$ to obtain a $M$-quasiconformal principal mapping $g$.  
 
 We next put $f=g\circ F$. The classical Gehring-Lehto result \cite{GL} on Sobolev homeomorphisms verifies that $F$ is differentiable almost everywhere. Since $F$ has the Lusin property ${\mathcal N}^{\pm 1}$,  the same is true for $f$, and we have  $$|Df(z)|^2  \leq
 M J(w,g) K(z,F) J(z,F) = M K_\nu(z) J(z,f), \quad \quad w = F(z) . $$
 Arguing as in (\ref{D17.72}) we see that 
  \begin{equation}\label{h}
  \frac{|Df|^2}{\log(e + |Df|)} \leq \frac{2}{p} \Bigl[   J(\cdot, f) + e^{p M K_\nu}\Bigr]
\end{equation}
is locally integrable. 

In the proof of Theorem \ref{Existence} we  constructed the approximants $F^m$ of the principal  mapping $F$.  To obtain  uniform bounds  we may  apply the classical Bieberbach area theorem  \cite{Duren}. This  gives
$$\int_{B(0,r)}J(z, \phi) \leq \pi r^2, \quad \quad r\geq 1,$$  
for any quasiconformal  principal solution, conformal outside the unit disc.
In particular with  (\ref{h}) we  see that   the compositions $g\circ F^m$ are uniformly bounded in the space $W^{1,Q}_{loc}(\IC)$, $$Q(t) = \frac{t^2}{\log(e+t)}.$$  Since  $g\circ F^m(z) \to f(z)$  locally uniformly, we infer that $f$ is a Sobolev mapping contained in the class
  $W^{1,Q}_{loc}(\IC)$. 
  However, note that  although the
quasiconformal map $g$ will lie  \cite{As}  in the space $W^{1,s}_{loc}(\IC)$  for all
\[ 2\leq s< \frac{6}{3-p}, \]
yet the composition $f= g \circ F$ will not lie in $W^{1,2}_{loc}(\IC)$ in general.

  We defined the Beltrami coefficient $\kappa = \mu_{g}$ through the formula (\ref{kapdefdeg}), but  as well  one may identify $\mu_g = \mu_{f \circ F^{-1}} $ via the familiar formula for the dilatation of a composition of mappings,
 \begin{equation} \label{kumpula16}
  \mu_{f \circ \, F^{-1}} ( w ) = \frac{ \mu_f(z) - \mu_F(z)}{1 - \mu_f(z) {\overline{\mu_F(z)}}}    
\Bigl(\frac{F_z \,(z)}{|F_z\,(z) |}\Bigr)^2,   \hskip20pt w = F(z),\nonumber
\end{equation}
which is a direct consequence of the chain rule.
Comparing the expressions  shows that $\mu_f = \mu$, and hence $f$ in fact solves the required Beltrami equation $f_\zbar = \mu f_z$.   Clearly $f$ is a principal solution.  Therefore we have established the existence of the measurable Riemann mapping, for any exponentially integrable distortion.
  
   Uniqueness and Stoilow factorization now follow from Theorem  \ref{genstoilow}. Thus the proof of Theorem \ref{Existence2} is complete. 
\end{proof}
 
 Once we have  the existence of the principal solution, with the equicontinuity properties provided by Lemma \ref{cc26} and Corollary \ref{apuva}, the usual normal family arguments quickly give solutions to the global degenerate Beltrami equation, where the coefficient $\mu(z)$ is not necessarily compactly supported. For an overview, see \cite{AIM}, Section 20.5.
 \bigskip
 
 From the above proof and (\ref{lennossa2}) we  distill a powerful factorization, one of  the key facts  in obtaining the sharp regularity.

\begin{corollary} \label{lennossa}
Suppose the distortion function $K=K(z)$ satisfies $e^{K}\in L^p(\ID)$ for some $ p > 0$. Then for any $M \geq 1 $ the principal solution  to  $ f_\zbar(z)=\mu(z) \; f_z(z)$ admits a factorization
$$ f = g \circ F,
$$
where both $g$ and $F$ are principal mappings,  $g$ is $M$-quasiconformal and $F$ satisfies
$$ \int_{\ID} e^{pM K(z,F)}  \leq C_0 < \infty.
$$
\end{corollary}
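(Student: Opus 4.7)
The plan is to distill the factorization already constructed inside the proof of Theorem \ref{Existence2}, replacing the ad hoc choice $M = 3/p$ there by an arbitrary parameter $M \geq 1$. I would make this explicit by repeating the hyperbolic splitting of $\mu$, solving two Beltrami equations, and composing. The payoff is that exactly the same pair of bounds used before (one on the new distortion, one on the new dilatation) can now be carried out with $M$ as a free parameter, which is what is needed for the optimal regularity proof in Section \ref{optimal}.

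First, for each $z \in \ID$ choose $\nu(z)$ on the radial segment from $0$ to $\mu(z)$ so that
$$\rho_{\ID}(\nu(z),\mu(z)) = \min\bigl(\log M,\,\rho_{\ID}(0,\mu(z))\bigr).$$
Because $\nu$ lies between $0$ and $\mu$ along a hyperbolic geodesic, $\rho_{\ID}(0,\nu)+\rho_{\ID}(\nu,\mu)=\rho_{\ID}(0,\mu)$, and therefore $MK_\nu = e^{\log M}\,e^{\rho_\ID(0,\nu)} \leq K_\mu + M$, exactly as in the proof of Theorem \ref{Existence2}. This gives
$$\int_{\ID} e^{pMK_\nu}\,\leq\, e^{pM}\int_{\ID} e^{pK} \;=:\; C_0 \,<\, \infty.$$
Next, apply Theorem \ref{Existence2} to produce the principal solution $F$ of $F_{\bar z}=\nu F_z$; by the bound just proved, $F$ is the desired factor with the required exponential distortion estimate.

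It remains to verify that the complementary factor is $M$-quasiconformal. Since $F$ is a $W^{1,Q}_{loc}$-homeomorphism of exponentially integrable distortion, it enjoys the Lusin $\mathcal{N}$ and $\mathcal{N}^{-1}$ properties (when $pM>2$ this is Corollary \ref{lennossa3}; otherwise one factors $F$ once more, or invokes the known $W^{1,Q}$-version). Define $\kappa$ on $F(\IC)=\IC$ by the formula (\ref{kapdefdeg}) with $F$ in place of the old $F$; then $\kappa$ is well defined a.e.\ and
$$\frac{1+|\kappa|}{1-|\kappa|} \;=\; e^{\rho_\ID(\nu,\mu)} \;\leq\; M \quad \text{a.e.}$$
Solving the uniformly elliptic Beltrami equation with coefficient $\kappa$ yields an $M$-quasiconformal principal mapping $g$. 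The chain rule, combined with (\ref{kumpula16}), shows that $g\circ F$ is a $W^{1,Q}_{loc}$-homeomorphism with Beltrami coefficient $\mu$ and principal expansion at infinity; by the uniqueness part of Theorem \ref{Existence2} it equals $f$, so $f = g\circ F$ is the desired factorization.

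The main obstacle I foresee is not conceptual but bookkeeping: one must justify that $F$ has the Lusin property in order to define $\kappa$ by (\ref{kapdefdeg}), and this requires some care when $pM \leq 2$, a regime that the proof of Theorem \ref{Existence2} conveniently sidestepped by taking $M=3/p$. In practice the corollary will be invoked with $M$ large (see Section \ref{optimal}), so $pM>2$ and Corollary \ref{lennossa3} applies directly; the general case only requires citing the $W^{1,Q}_{loc}$ version of the area formula. All remaining steps are routine consequences of the machinery already set up.
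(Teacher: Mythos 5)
Your proposal matches the paper's (implicit) argument exactly: the paper proves Corollary \ref{lennossa} by simply noting that the construction inside the proof of Theorem \ref{Existence2}, together with the bound (\ref{lennossa2}), works verbatim for an arbitrary parameter $M \geq 1$ rather than the specific choice $M = 3/p$. You reproduce this hyperbolic splitting, the resulting bound $\int_\ID e^{pMK_\nu} \leq e^{pM}\int_\ID e^{pK}$, the definition of $\kappa$ via (\ref{kapdefdeg}), the $M$-quasiconformality of $g$, and the identification $f = g\circ F$ via the chain rule and uniqueness. You also correctly flag a point the paper glosses over: when $pM \leq 2$ one cannot cite Corollary \ref{lennossa3} directly to justify well-definedness of $\kappa$, and your suggested fix (factor $F$ once more through a quasiconformal map and a $W^{1,2}_{loc}$ map, or note that only $pM > 2$ is used in Section \ref{optimal}) is sound. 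This is the same proof, carried out carefully.
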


\medskip

\noindent{\it Remark.} \quad Using the above factorization we obtain  the properties ${\mathcal N}$
and ${\mathcal N}^{-1}$
for all maps of  exponential distortion.
  Namely, assume that $ \phi \in W^{1,Q}_{loc}(\IC)$ is a mapping of finite distortion with
 $e^{K(z, \phi)}  \in L^p_{loc}(\Omega)$ for some $ p  > 0 $. Then first use locally  the Stoilow factorization  $\phi = h \circ f$ where $h$ is holomorphic and $f$ is a principal solution as in  Theorem  \ref{Existence2}. Applying  Corollary  \ref{lennossa} we can make a  further factorization,
  $ f = g \circ F
  $
  where $g$ is quasiconformal and  $F$ has exponential distortion  $e^{K} \in L^p$ with $p > 2$.  According to   Corollary \ref{lennossa3} each  factor  in $\phi = h \circ g \circ F$  has the properties ${\mathcal N}^{\pm 1}$.

 \section{Optimal regularity: proof of Theorem \ref{22.hkinov07}} \label{optimal}

We start with  general area distortion bounds  that have independent interest, since they are optimal up to estimates at the borderline case. In fact,
the factorization method described in Corollary \ref{lennossa} enables us to  improve the  distortion exponent of Corollary \ref{1helsinkinov07}.
 \begin{theorem}\label{2.hkinov07} Let $|\mu(z)| < 1$ almost everywhere and  $\mu(z) \equiv 0$ outside the unit disk. Suppose $ f$ is a principal solution to $f_\zbar = \mu f_z$. If
   $$e^{K(z, f)}  \in L^p(\ID) \quad \mbox{ for some }  p  > 0,
 $$
 then for any $0 <\beta  < p$ we have 
 \begin{equation} \label{hotelli7}
 |f(E)| \leq C \log^{-\beta}(e + \frac{1}{|E|}), \quad \quad E \subset \ID.
\end{equation}
The constant $C$ above depends on $\beta$, $p$ and $\| e^{K(z,f)} \|_p$ only.
  \end{theorem}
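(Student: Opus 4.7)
The plan is to combine the factorization of Corollary \ref{lennossa} with the weak area distortion of Corollary \ref{1helsinkinov07}, exploiting the freedom to make the exponential-integrability exponent of the ``residual'' factor as large as needed.

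First, fix $0 < \beta < p$ and choose the parameter $M > 1$ large enough that $M(p - \beta) > 2$. Applying Corollary \ref{lennossa} with this $M$ produces a factorization $f = g \circ F$, where $g$ is an $M$-quasiconformal principal mapping and $F$ is a principal solution whose distortion satisfies $\int_\ID e^{pM K(z,F)} \leq C_0 < \infty$. Because $pM > 2$, the equation for $F$ falls under Theorem \ref{Existence}: we have $F \in W^{1,2}_{loc}(\IC)$, and the Neumann representations (\ref{hotelli12}) give $F_\zbar = \sigma_\nu$ and $F_z = 1 + {\mathcal S}\sigma_\nu$, where $\nu = \mu_F$.

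Second, I would estimate $|F(E)|$ for $E \subset \ID$ using change of variables and the Neumann representation:
\begin{equation*}
|F(E)| \,=\, \int_E J(z,F)\, dm(z) \,\leq\, \int_E |F_z|^2 \,\leq\, 2|E| + 2\, \|\chi_E {\mathcal S}\sigma_\nu\|_{L^2}^2.
\end{equation*}
Set $\beta' := M\beta + 2$. The inequality $M > 2/(p-\beta)$ gives $\beta' < pM$, while $\beta' > 2$ is automatic; hence Corollary \ref{1helsinkinov07} applied to $\nu$ (with integrability exponent $pM$ and parameter $\beta'$) yields
\begin{equation*}
\|\chi_E {\mathcal S}\sigma_\nu\|_{L^2} \,\leq\, C_1 \log^{1-\beta'/2}\bigl(e + 1/|E|\bigr).
\end{equation*}
Squaring and substituting gives $|F(E)| \leq C_2 \log^{2-\beta'}(e + 1/|E|)$ for all $E \subset \ID$ (the $2|E|$ term being absorbed since $|E| \leq \pi$).

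Third, I would apply the Astala area distortion estimate (equation (\ref{tuulikki7})) to the $M$-qc principal factor $g$. Standard Bieberbach-type a priori bounds place $F(\ID)$ inside a disk of radius depending only on $p$ and $\|e^K\|_p$, so after rescaling the qc area distortion yields
\begin{equation*}
|f(E)| \,=\, |g(F(E))| \,\leq\, C_3\, |F(E)|^{1/M} \,\leq\, C_4 \log^{(2-\beta')/M}\bigl(e + 1/|E|\bigr).
\end{equation*}
The choice $\beta' = M\beta + 2$ makes $(2-\beta')/M = -\beta$, which is precisely the exponent in (\ref{hotelli7}).

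The principal obstacle is the interplay between the two constraints $\beta' > 2$ (needed to invoke Corollary \ref{1helsinkinov07}) and $\beta' < pM$; both are met only when $M$ is chosen large enough, and this flexibility is exactly what Corollary \ref{lennossa} provides---at the cost of passing through an auxiliary $M$-qc map whose area distortion is handled by Astala's theorem. A secondary technical point is a uniform bound on the diameter of $F(\ID)$, needed so that the qc area distortion for $g$ applies with constants depending only on $p$, $\beta$ and $\|e^{K(\cdot,f)}\|_p$; this is standard for principal solutions lying in $W^{1,2}_{loc}$.
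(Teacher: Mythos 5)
Your argument is correct and follows essentially the same route as the paper: the factorization $f = g \circ F$ from Corollary \ref{lennossa} with a large enough $M$, Corollary \ref{1helsinkinov07} applied to the residual factor $F$ to bound $|F(E)|$, and Astala's area distortion estimate (\ref{tuulikki7}) for the $M$-quasiconformal principal factor $g$. The only cosmetic difference is that you calibrate $\beta' = M\beta + 2$ to hit the exponent $-\beta$ exactly, whereas the paper introduces an intermediate $\beta_0 \in (\beta,p)$ and overshoots slightly; your explicit note about needing $F(\ID)$ to have bounded diameter before invoking the qc area distortion is a correct and careful observation that the paper leaves implicit.
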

 \begin{proof}
 Choose $ \beta_0 \in (\beta, p)$ and $M \geq 1$ so that 
 $$\frac{2}{M} < \beta < \beta_0 - \frac{2}{M} .
 $$ 
 We will then use  the factorization $f = g \circ F$ from Corollary \ref{lennossa}. Since $pM > \beta_0 M > 2$, 
Corollary \ref{1helsinkinov07} applies to $ \sigma_\mu = F_\zbar $ and $F_z = 1 + {\mathcal S}F_\zbar
=1+{\mathcal S} \sigma_\mu$,
 $$ |F(E)| = \int_E |F_z|^2 - |F_\zbar|^2 \leq  2|E| + 2 \int_E |F_z -1|^2 \leq C   \log^{2-\beta_0M}(e + \frac{1}{|E|}).
 $$ 
Above $|F(E)|$ is legitimately obtained by integrating the Jacobian since $F$ is a Sobolev homeomorphism that
 satisfies condition ${\mathcal N}$.
 On the other hand, since $g$ is an $M$-quasiconformal principal mapping we can use the area distortion estimate (\ref{tuulikki7}).
 This gives 
  $$|f(E)| = |g \circ F(E)| \leq \pi^{} M |F(E)|^{1/M} \leq \pi\, C \Bigl[ \log(e +  \frac{1}{|E|}) \Bigr]^{(2-\beta_0 M )/M}.
 $$
 Since $\beta < \beta_0 - \frac{2}{M} $ the result follows.
\end{proof}

Again  the family (\ref{kovexample}) shows  that  the measure distortion bounds are  optimal in terms
of the exponent of the logarithm in (\ref{hotelli7}), up to the possible estimates at  the borderline $\beta = p$. 

We are ready for  

 \begin{proof}[\bf Proof Theorem \ref{22.hkinov07}.] Assume that  we have a mapping of finite distortion $f \in W^{1,1}_{loc}(\Omega)$  with  $e^{K(z,f)} \in L^p_{loc}(\Omega)$. From (\ref{D17.72}) it follows that  $f \in W^{1,Q}_{loc}(\Omega)$, and hence  the general Stoilow factorization Theorem \ref{genstoilow} applies.
 With the  factorization we may in fact assume that $\Omega =\IC$ and that $f$ is the  principal solution of Theorem \ref{Existence2}, conformal outside $\frac{1}{2}\ID$. It is then enough to consider
the regularity over $\ID.$
 
We prove first
\begin{equation} \label{keerolta11}
 J(z,f) \log^\beta \left(e +  J(z,f) \right)   \in L^1_{loc}(\ID) \quad   \quad  0 <  \beta <  p .
\end{equation}
 In order to verify this it is enough to show  that $J^*(x) \log^\beta \bigl(e +  J^*(x) \bigr)   \in L^1(0,\pi)$, where  $J^*(x)$, $x \geq 0$,  is the  nonincreasing  rearrangement of $J(z,f)$.  
 
For the rearrangement the estimate  (\ref{hotelli7})  holds   in the form
\begin{equation} \label{lisakaava21}
   \int_0^t J^*(x)dx \leq C \log^{-\beta}(e + \frac{1}{t}), \quad   \quad  0 <  \beta <  p, \quad  0 < t \leq \pi.
\end{equation}
 Observe that we have applied here  the remark after Corollary \ref{lennossa}. 
 Further, since $J^*$ is nonincreasing,
\begin{equation}\label{keerolta432}
 J^*(t) \leq \frac{1}{t}  \int_0^t J^*(x)dx \leq \frac{C}{t} .
\end{equation}

Let us  now  fix $0 < \beta < p$ and  choose $\alpha \in (\beta,p)$. If we write
 $$ \phi(t) : =   \int_0^t J^*(x)dx, \quad  0 < t \leq \pi,
 $$
then $0 \leq \phi(t) \leq C  \log^{-\alpha}(e + 1/t),  0 < t \leq \pi$. In addition,  (\ref{keerolta432}) gives
$$\int_0^\pi J^*(x) \log^\beta \bigl(e +  J^*(x) \bigr) \leq C \int_0^\pi  \phi'(x) \log^\beta(e + 1/x) \, dx . 
$$
Now an integration by parts shows that the last integral is finite. We have thus proved  the claim (\ref{keerolta11}).
 
The second claim 
\begin{equation} \label{keerolta22}
\hskip-33pt|Df|^2 \, \log^{\beta -1}(e + |Df|) \in L^1_{loc}(\Omega) 
\end{equation}
 can be  deduced from (\ref{keerolta11}) by observing that
for every $\beta, p >0$ there are positive constants $C_1$ and $C_2$ such  that
$$
xy\log^{\beta -1}(e+\sqrt{xy})\leq C_1 \, x\log^{\beta}(e+x)+C_2 \, e^{p\, y} \quad \;\mbox{for all} \;\; x,y>0.
$$
For this consider  separately the cases where $x < e^{\frac{p}{2} y}$ and where  $x \geq e^{\frac{p}{2} y}$. Now, since $s \mapsto s^2 \log^{\beta - 1}(e + s)$ is increasing, we conclude

\begin{eqnarray}
&&\hskip-20pt|Df(z)|^2 \log^{\beta -1}\left( e + |Df(z)| \right)  \leq  K(z,f)  J(z,f) \log^{\beta -1}\left(e + \sqrt{K(z,f)J(z,f)} \right) \nonumber \\ \nonumber \\
&&\hskip60pt\leq C_1 J(z,f) \log^{\beta}\left( e + J(z,f)\right) + C_2\,  e^{p K(z,f) }  \in L^1_{loc}(\ID) .\nonumber
\end{eqnarray}
Lastly, the family $g_p$ from (\ref{kovexample}) shows that (\ref{keerolta11}), (\ref{keerolta22}) may fail  at the borderline  $\beta = p$. 
The proof of Theorem \ref{22.hkinov07} is complete.
\end{proof}

\section{Divergent Neumann series}\label{div}

We next   verify that our  decay estimate from Theorem \ref{2helsinkinov07} is essentially the best possible. One way to do this is simply to observe that if the decay of the $L^2$-norm of the $n$-th term
would   be of the order $O((n+1)^{-\delta\, p})$ with a uniform  $\delta >1/2$, our proof above would yield an area distortion result which would be  'too good', 
contradicting Example (\ref{kovexample}). We leave the details to the reader. Instead, we present 
here a more concrete approach that is of independent interest.

We start  by constructing families of  mappings. Suppose first  that $\gamma: (0,1] \to \IC$ is continuous with
$$|\gamma(s)| < 1  \quad \mbox{ for } 0 < s \leq 1$$
Then  define
\begin{equation} \label{kaava1}
\rho(t)  = \rho_{\gamma}(t): =\exp\left[- \int_t^1 \frac{1 + \gamma(s)}{1 - \gamma(s)} \,  \frac{ds}{s} \right]
\end{equation}
or equivalently,
\begin{equation}
\gamma(t) = \gamma_\rho(t) =  \frac{t\, \rho'(t) - \rho(t)}{t\, \rho'(t) + \rho(t)}, \quad\quad t \in   (0,1].
\nonumber
\end{equation}
In any case, we see from (\ref{kaava1}) that 
 $$ t \mapsto |\rho(t)|  \quad \mbox{ is strictly increasing on (0,1]}.
 $$
 If, in addition,
 \begin{equation}
\int_0^1 \Re e \left( \frac{1 + \gamma(s)}{1 - \gamma(s)} \right)\,  \frac{ds}{s} = \infty\nonumber
\end{equation}
 then $\rho(t) \to 0$ as $t \to 0$. In particular, in this case  the mapping
\begin{equation} 
 f(z) = \frac{z}{|z|} \rho(|z|), \quad |z|  \leq 1, \quad \mbox{ with } \quad f(z) =  z, \quad |z| \geq 1,\nonumber
\end{equation}
defines a homeomorphism of $\IC$.

Next, let $\alpha > 0$ with $$\hskip-30pt\rho(t) =  \left( \log \frac{5}{t} \right)^{-\alpha}  \quad \mbox{that is,}  \quad \gamma(t) = \frac{\alpha - \log(5/t)}{\alpha + \log(5/t)}.$$
 Consider then the  holomorphic family of  mappings
$$f_\lambda(z) = \frac{z}{|z|} \rho_{\lambda}(|z|), \quad \quad \rho_\lambda := \rho_{(\lambda \gamma)}, \;\mbox{ where }\;  |\lambda| < 1.$$  
In other words, $$\rho_{\lambda}(t) =\exp\left[ \int_1^t \frac{1 + \lambda \, \gamma(s)}{1 - \lambda \, \gamma(s)} \,  \frac{ds}{s} \right],  \;\;  0 < t \leq 1,  \quad \mbox{ and }  \quad \rho_{\lambda}(t) \equiv t,\;\;  t \geq 1.$$
The mappings $f_\lambda$ are all quasiconformal in the entire plane. In fact, they define a holomorphic motion 
$$\Phi: \ID \times \IC \to \IC, \quad \quad \Phi(\lambda,z) = f_\lambda(z).$$ 

At the boundary, when $\lambda \to \zeta \in \IS^1$, we attain well defined mappings $f_\zeta$ of finite distortion. If fact, calculating  the complex dilatation shows that $|\mu(z)| = |\gamma(t)|$ for $ |z|=t$. Hence the distortion function 
$$K(z, f_\zeta) =  \frac{1 + | \gamma(t)|}{1 -  | \gamma(t)|} , \quad t = |z| \mbox{ and } |\zeta| = 1,$$ 
which  is independent of $\zeta$, with
 $$K \simeq \frac{1}{\alpha} \log \frac{1}{|z|} \quad \quad \mbox{ as } |z| \to 0.
 $$
 Thus given $\varepsilon > 0$ we can choose $\alpha < \frac{1}{2}$  so that $e^{ K(z)}  \in L^{1-\varepsilon}(\ID)$. On the other hand, one computes that
 $$\frac{\partial f_\zeta}{\partial \zbar}(z) \simeq  \frac{1}{|z|}  \left( \log \frac{1}{|z|} \right)^{\frac{-4\alpha }{|1+\zeta|^2}} \quad \quad \mbox{ as } |z| \to 0.
 $$
Therefore  $\partial_\zbar f_\zeta \notin L^{2}(\IC)$ whenever $\zeta$ belongs to the non-degenerate interval
 $$ \{ \zeta \in \IS^1 : 2\alpha < \frac{|1+\zeta|^2}{4} \} .
 $$
In particular, $\lambda \to \partial_\zbar f_\lambda$ does not belong to the Hardy space $H^2(\ID; L^2(\IC))$, and hence the sequence of norms $\| (\mu {\mathcal S})^n \mu\|_2$ is not square summable, i.e.
$$\| (\mu {\mathcal S})^n \mu\|_2 \not\leq C n^{-(1+\varepsilon)/2}
$$ 
and the decay given by Theorem \ref{2helsinkinov07} cannot be improved at $p=1$. 
 \bigskip

 \section{Applications to Degenerate Elliptic PDE's }\label{elliptic}

 The optimal regularity established in Theorem \ref{22.hkinov07}  
 obviously has a number of basic consequences, for instance towards
  removability of singularities.  However, our aim here is not to  
 consider these consequences systematically. We only indicate 
 one such example, namely an application to degenerate elliptic  
 equations.
  Degenerate elliptic PDE's arise naturally in hydrodynamics,  
 nonlinear elasticity,  holomorphic dynamics  and several other  
 related areas. In two dimensions these equations are intimately  
 related to the mappings of finite distortion.

Suppose that $u \in  W^{1,1}_{loc}(\Omega)$ is a  
 (distributional) solution to the equation
 \begin{equation} \label{divtypeyht}
  \Div\,  A(z) \nabla u = 0, \quad \quad \mbox{ in } \Omega \subset  
 \IR^2.
 \end{equation}
 Let us  assume that   $u$ has {\it finite energy}, that is
 \begin{equation}  \label{divtypeyhtener}
 \int_\Omega \langle \nabla u(z) , A(z) \nabla u (z) \rangle \; dm 
 (z) < \infty.
 \end{equation}
 In case the  domain $\Omega$ is simply connected, the solution $u$   
 admits a conjugate function, 
  defined by the  
 condition
 \begin{equation} \label{lisakaava15}
 \nabla v = J A(z) \nabla u.
 \end{equation}
 Here $J(x,y) = (-y,x)$ is the rotation by a right angle.

 In the complex notation,  the function
 $$ f = u + iv
 $$
 solves the $\IR$-linear equation
 \begin{equation} \label{rlineqkaava}
 \frac{\partial f}{\partial \zbar} = \mu(z)  \frac{\partial f} 
 {\partial z} + \nu(z)\,  {\overline{\frac{\partial f}{\partial z}}} 
 \, ,
 \end{equation}
 where the coefficients $\mu(z)$,  $\nu(z)$ are explicit functions   
 of the elements of $A(z)$, independent  of the  particular solution  
 $u$.  Conversely, for  these coefficient $\mu$ and $\nu$, the real  
 part of any solution $f = u + iv$ to (\ref{rlineqkaava}) solves the  
 second order  equation (\ref{divtypeyht}).
 \medskip

 Next,  let us assume that for almost all $z$, the matrix $A(z)$ is  
 elliptic and  symmetric. Then there is a function $1 \leq K(z) <  
 \infty$, finite for almost all $z$, such that
 \begin{equation} \label{lisakaava16}
  \frac{1}{K(z)} |h|^2 \leq  \langle h , A(z) h \rangle \leq K(z)  | 
 h|^2, \quad h \in \IR^2.
 \end{equation}
 In terms of the mapping $f = u + iv$, the conditions (\ref 
 {lisakaava15}), (\ref{lisakaava16}) take the familiar form
 $$ |Df(z)|^2 \leq K(z) J(z,f),
 $$
 where the Jacobian can alternatively be written as
 $$ J(z,f) =  \langle \nabla u(z) , A(z) \nabla u (z) \rangle .
 $$
 In particular, $f = u + iv$ is a mapping of finite distortion if   
 and only if the solution $u$ has finite energy  in the sense (\ref 
 {divtypeyhtener}). As an immediate consequence of Corollary \ref 
 {l2tapaus} we find optimal conditions guaranteeing the $L^2$-regularity  
 of the derivatives of  the solutions $u$.

  \begin{theorem} \label{hotelli83}
  Suppose $u \in W^{1,1}_{loc}(\Omega)$ is a  solution  to  the   
 equation {\rm  (\ref{divtypeyht})} with
  $$\int_\Omega \langle \nabla u(z) , A(z) \nabla u (z) \rangle \; dm 
 (z) < \infty.$$
 If the ellipticity bound $K(z)$ of the coefficient matrix $A(z)$  
 satisfies
 \begin{equation} \label{hotelli71}
  e^{K(z)}  \in L^p_{loc}(\Omega)
 \end{equation}
 for some  $p > 1$,  then the solution $u$ has the regularity
 \begin{equation}
 \label{hotelli90}
   |\nabla u|^2 +  |{A}(z) \nabla u)|^2  \in L^1_{loc}(\Omega).
 \end{equation}
 \end{theorem}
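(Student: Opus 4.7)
The plan is to reduce the scalar divergence-form equation to the complex Beltrami-type setting already developed in the preamble and then invoke Corollary~\ref{l2tapaus}. Since (\ref{hotelli90}) is a local statement, I may fix a relatively compact simply connected subdomain $\Omega' \subset \Omega$ and prove the conclusion on $\Omega'$.

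On $\Omega'$, equation (\ref{divtypeyht}) says that the vector field $A(z)\nabla u$ is divergence-free, so by the Poincar\'e lemma there is a function $v$, unique up to an additive constant, with $\nabla v = J A(z)\nabla u$ as in (\ref{lisakaava15}). To see that $v \in W^{1,1}_{loc}(\Omega')$ I will use the pointwise bound $|A\nabla u|^2 \leq K(z)\langle \nabla u, A(z)\nabla u\rangle$ (a consequence of the symmetry of $A$ together with the upper ellipticity bound in (\ref{lisakaava16})), combined with $K \in L^1_{loc}$ (inherited from (\ref{hotelli71})) and the finite energy hypothesis, via Cauchy--Schwarz. The mapping $f = u + iv$ then lies in $W^{1,1}_{loc}(\Omega',\IC)$.

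As noted in the preamble, $f$ solves the $\IR$-linear equation (\ref{rlineqkaava}), its Jacobian is $J(z,f) = \langle \nabla u, A(z)\nabla u\rangle \in L^1_{loc}(\Omega')$, and $f$ satisfies the distortion inequality $|Df|^2 \leq K(z) J(z,f)$. Consequently $f$ is a mapping of finite distortion on $\Omega'$ whose distortion function is pointwise dominated by $K(z)$, so (\ref{hotelli71}) gives $e^{K(\cdot,f)} \in L^p_{loc}(\Omega')$ with the same exponent $p > 1$. Corollary~\ref{l2tapaus} therefore applies and yields $f \in W^{1,2}_{loc}(\Omega')$. Since the rotation $J$ is an isometry, $|\nabla v| = |A(z)\nabla u|$, and hence
$$|\nabla u(z)|^2 + |A(z)\nabla u(z)|^2 = |\nabla u(z)|^2 + |\nabla v(z)|^2 \in L^1_{loc}(\Omega'),$$
which is exactly (\ref{hotelli90}).

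The main obstacle is organisational rather than analytical: one must verify that the passage from the second-order scalar equation to the first-order complex system produces a mapping of finite distortion whose distortion function inherits the \emph{exact same} exponential integrability as $K(z)$, so that the sharp threshold $p=1$ of Corollary~\ref{l2tapaus} aligns with the hypothesis $p > 1$ of the theorem. Once this correspondence is secured, the entire analytic content of the result is delivered by the sharp regularity corollary proved earlier in the paper.
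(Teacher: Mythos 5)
Your argument is correct and follows essentially the same route as the paper: construct the stream function $v$ on a simply connected subdomain via $\nabla v = JA\nabla u$, observe that $f=u+iv$ is then a mapping of finite distortion with $J(z,f)=\langle \nabla u, A\nabla u\rangle\in L^1_{loc}$ and distortion function dominated pointwise by the ellipticity bound $K(z)$, and invoke Corollary~\ref{l2tapaus}. The paper presents this as an immediate consequence of the corollary after the same reduction; you merely make the local details (restriction to $\Omega'$, membership of $v$ in $W^{1,1}_{loc}$) explicit, which is a reasonable amount of extra care rather than a different approach.
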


 \bigskip

 \noindent Similarly, we have
 $$ |\nabla u|^2 \log^{\beta -1}(e + |\nabla u|) \in L^1_{loc} 
 (\Omega), \quad \quad 0 < \beta < p,
 $$
 and
 $$
  |{ A}(z)\nabla u)|^2 \log^{\beta -1}(e + |{A}(z)\nabla u)|)  
 \in L^1_{loc}(\Omega),
  \quad \quad 0 < \beta < p
 $$
 \smallskip

 \noindent whenever the ellipticity $K(z)$ satisfies $e^{K(z)} \in  
 L^p_{loc}(\Omega)$, $p > 0$.
 Furthermore, via the Stoilow factorization many properties of  the 
 harmonic functions generalize to the solutions to these degenerate  
 elliptic equations. There are even non-linear counterparts, see \cite{AIM}.

\bigskip

One of the fundamental features  of  the mappings of bounded distortion are their self-improving regularity properties. Similar phenomena occur also in the theory of  mappings of finite distortion. A typical example is obtained by combining the Stoilow factorization Theorem \ref{genstoilow} with our Theorem  \ref{22.hkinov07}. There are factorization theorems slightly beyond the class  $W^{1,Q}_{loc}(\IC)$, $Q(t)=t^2 \log^{-1}(e+t)$, see for instance \cite{IMbook}, p. 280. Both factorization methods lead to removability results for mappings of finite distortion. We refer the reader to  \cite{IMbook}, \cite{FKZ}, \cite{AIKM} and their references for   these and other related  methods, such as Caccioppoli-type estimates.  In view of the  discussion in this section, one obtains corresponding removability results   for the  solutions of the  elliptic PDE's.
\bigskip

\noindent {\bf Acknowledgements}. The authors want to thank Albert Clop for pointing out a mistake in an earlier version of  Example  (\ref{kovexample}).

\end{document}